\documentclass[preprint,12pt]{elsarticle}

\usepackage[reqno]{amsmath}
\usepackage{amssymb}
\usepackage{amsthm}
\usepackage{mathtools}

\usepackage[english]{babel}
\usepackage{algorithm}
\usepackage{algpseudocode}
\usepackage{tikz-cd}
\usetikzlibrary{calc}

\usepackage[colorlinks=true, linkcolor=blue, citecolor=blue, urlcolor=blue]{hyperref}

\DeclareMathOperator{\tr}{tr}

\DeclareMathOperator{\End}{End}
\DeclareMathOperator{\Aut}{Aut}

\newcommand{\ZZ}{\mathbb{Z}}
\newcommand{\QQ}{\mathbb{Q}}
\newcommand{\FF}{\mathbb{F}}
\newcommand{\OO}{\mathcal{O}}
\newcommand{\Fp}{\mathbb{F}_p}

\newcommand{\Fpbar}{\overline{\mathbb{F}}_p}

\newcommand{\Isog}{\mathcal{I}_{\langle P \rangle}}
\newcommand{\pfrob}{\phi_{p,E}}

\theoremstyle{plain}
\newtheorem{Theorem}{Theorem}[section]
\newtheorem{Lemma}[Theorem]{Lemma}
\newtheorem{Proposition}[Theorem]{Proposition}
\newtheorem{Corollary}[Theorem]{Corollary}

\theoremstyle{definition}
\newtheorem{Definition}[Theorem]{Definition}
\newtheorem{Example}[Theorem]{Example}

\theoremstyle{remark}
\newtheorem{Remark}[Theorem]{Remark}

\begin{document}

\begin{frontmatter}

\title{Supersingular elliptic curves and twisting endomorphisms}

\author[vt]{Sarah Arpin}
\ead{sarpin@vt.edu}

\author[udl]{Josep M. Miret}
\ead{josepmaria.miret@udl.cat}

\author[udl]{Jordi Pujol\`as}
\ead{jordi.pujolas@udl.cat}

\author[udl]{Javier Valera}
\ead{javier.valera.martin@gmail.com}

\address[vt]{Department of Mathematics, Virginia Tech, Blacksburg, VA, USA}
\address[udl]{Departament de Matem\`atica, Universitat de Lleida, Lleida, Spain}

\begin{abstract}
We generalize the notion of twisting endomorphisms, first defined by \cite{_C_P_V_}, to the setting of $\OO$-oriented supersingular elliptic curves. 
We give an algorithm to find supersingular elliptic curves over $\mathbb{F}_p$ with a twisting endomorphism of prime degree $\ell$, and we use it to compute a basis of their full endomorphism rings.
\end{abstract}

\begin{keyword}
supersingular elliptic curves \sep endomorphism rings \sep twisting endomorphisms

\MSC[2020] 11
\end{keyword}

\end{frontmatter}

\section{Introduction}\label{Intro}

Isogeny-based cryptography relies on the hardness of several computational problems related to supersingular elliptic curves and their isogenies over finite fields. 
In the heart of this technology lie the path-finding problem  in supersingular $\ell$-isogeny graphs (PFP), the problem of the computation of endomorphism rings of supersingular elliptic curves and  the one endomorphism problem (see \cite{_Ch_G_L_,_K_L_P_T_,_P_W_} for instance). 
A successful example is the SQISign signature scheme by De Feo, Kohel, Leroux, Petit and Wesolowski \cite{_dF_K_L_P_W_, _dF_L_L_W_}, which protects secret keys with the hardness of PFP. 
Recent reductions between these problems show they are all equivalent (see \cite{_W_,_P_W_}).  Then, the knowledge of the full endomorphism ring in the public data is undesirable in practical situations. 

In this paper, we work with the collection of $\OO$-oriented supersingular elliptic curves over $\Fpbar$, for a fixed imaginary quadratic order $\OO$. An $\OO$-oriented supersingular elliptic curve carries with it the information of a (primitive) embedding of $\OO$ into the endomorphism ring of the curve. This allows for navigating the isogeny graph via isogenies coming from the ideal class group of $\OO$, see \cite{_C_K_,_O_,_A_C_L_S_S_T_1,_A_C_L_S_S_T_2}. In this setting, we introduce the notion of $\OO$-oriented twists and $\OO$-twisting endomorphisms, a generalization of the twisting endomorphisms introduced in \cite{_C_P_V_}: 

\begin{Definition}[$\OO$-twisting endomorphism]
    Let $(E,\iota)$ be an $\OO$-oriented supersingular elliptic curve for an imaginary quadratic order $\OO = \ZZ[\omega]$. An endomorphism $\alpha\in\End(E)$ is an \emph{$\OO$-twisting endomorphism} if:
    \[\widehat\iota(\omega) \circ\alpha = \alpha\circ\iota(\omega).\]
\end{Definition}

We show in Theorem~\ref{thm:everyOO} that every $\OO$-oriented supersingular elliptic curve admits an $\OO$-twisting endomorphism. This generalizes the notion of a twisting endomorphism, for which $\omega = \sqrt{-p}$. Following the original nomenclature of \cite{_C_P_V_}, we call such endomorphisms ``twisting endomorphisms'', or ``(Frobenius) twisting endomorphisms'' when there is the need to emphasize.

We give an algorithm to compute supersingular elliptic curves with a (Frobenius) twisting endomorphism of prime degree $\ell$ (see Algorithm~\ref{algotwendo}). 
In certain situations, the quaternion order of endomorphisms generated by Frobenius and the twisting endomorphism can be extended to the maximal quaternion order which is the ring of endomorphisms of the elliptic curve, using the techniques developed in \cite{_F_I_K_M_N_}. We choose $\ell$ and $p$ such that both the factorisation of the classical modular polynomial $\Phi_{\ell}(x,y)$ modulo $p$, as well as the prime factorisation of $(\ell)$ in quadratic orders of discriminant of size around $p$ are successful. We also need to tell if an ideal of such orders is principal or not. The coefficients of $\Phi_{\ell}(x,y)$ for $\ell<1000$ are manageable (see \cite{_B_L_S_}), and both the factorisation of $(\ell)$ and the principality test are fast for small $p$, so none of these are a problem for small sizes. 

However, in real instances endomorphism degrees are often very large and composite. Theorem~\ref{main} gives an if-and-only-if condition for the existence of degree-$n$ twisting endomorphisms. 
It would be interesting to consider extending Algorithm~\ref{algotwendo} using the SuperSolver or WayFinder techniques \cite{SUPERSOLVER, WAYFINDER} to search for degree-$n$ twisting endomorphisms, or potentially even $\OO$-twisting endomorphisms. 

SageMath \cite{sage} code to accompany Algorithm~\ref{algotwendo} is available on GitHub: \url{https://github.com/SarahArpin/twendos}. Our algorithm for finding twisting endomorphisms of degree-$\ell$ does not scale to cryptographic size, since it requires modular polynomials, ideal factorisation, and principality tests in orders of large discriminant.

\subsection{Notation}
Let $ p > 3 $ be a prime number, $\mathbb{F}_q$ a finite field of characteristic $p$, and $\overline{\mathbb{F}}_q$ a fixed algebraic closure. We let 
\begin{align*}
E : y^2 = x^3 + ax +b
\end{align*}
be a supersingular elliptic 
over $\mathbb{F}_q$. The $p$-power Frobenius map
\begin{align*}
\phi_{p,E} : & E(\overline{\mathbb{F}}_q) \longrightarrow  E^{(p)}(\overline{\mathbb{F}}_q)\\
              & (x,y)                             \longmapsto     (x^p,y^p)
\end{align*}
is an isogeny from $E$ to $E^{(p)} : y^2 = x^3 + a^p x +b^p$.
For any point $ P \in E( \overline{\mathbb F}_q )$, we let $\mathcal I_{\langle P \rangle}$ the separable isogeny   
\[
\mathcal I_{\langle P \rangle} \ : \ E \ \longrightarrow \ E / \langle P \rangle
\]
given by Vélu  \cite{_V_}. Let $n = \deg (\mathcal{I}_{\langle P \rangle})$. We call $\Isog$ an $n$-isogeny.

The (geometric) endomorphism ring of $E$, denoted $\End(E)$, 
is isomorphic to a maximal order in a quaternion algebra $B_{p,\infty}/\QQ$.

If $k$ is some extension of  $\mathbb{F}_q$, we write $k$-isomorphisms from $E$ to some other $E' : Y^2 = X^3 + a'X +b'$ with $\rho_u$ for $u\in k$, namely  
\begin{align*}
\rho_{u}(x,y)=(u^2x,u^3y)=(X,Y),\,\,
(u^4 a,u^6 b )=(a',b').
\end{align*}
We say $E$ and $E'$ are isomorphic if they are geometrically isomorphic. We let $\text{Isom}_{\,\mathbb F_p}( E )$ 
be the set of supersingular elliptic curves over $\mathbb F_p$ that are $\mathbb F_p$-isomorphic to $E$.

\subsection{Supersingular elliptic curves over \texorpdfstring{$\Fp$}{Fp}}

In this section, let $E$ be a supersingular elliptic curve defined over $\Fp$.
The $p$-power Frobenius isogeny $\pfrob$ is an endomorphism of $E$. 
We recall the isogeny graph structure theorem in this case, and we refer the reader to \cite{_D_G_} for more detail. 
We denote by $\End_{\mathbb{F}_p}(E)\subset \End(E)$ the subring of endomorphisms which are defined over $\mathbb{F}_p$. For a prime $\ell\neq p$, we can use $\End_{\Fp}(E)$ to determine how many $\Fp$-rational $\ell$-isogenies $E$ has. 
The ring of $\Fp$-endomorphisms $\End_{\Fp}(E)$ of a supersingular elliptic curve $E/\Fp$ necessarily contains the subring $\ZZ[\pfrob]$, which is isomorphic to $\ZZ[\sqrt{-p}]$. 
In fact:
\begin{equation}\label{eq:fpendoring}
    \End_{\Fp}(E)\cong \ZZ[\sqrt{-p}]\text{ or }\ZZ\left[\frac{1 + \sqrt{-p}}{2}\right].
\end{equation}
If $\varphi:E\to E'$ is a prime-degree $\Fp$-rational isogeny and $\End_{\Fp}(E)\not\cong \End_{\Fp}(E')$, then $\deg\varphi = 2$.\footnote{See \cite{_M_M_S_T_V_} for a description of the case for ordinary elliptic curves.} 

\begin{Definition}[Horizontal, ascending, descending isogenies]\label{def:horascdesc}
Suppose $\varphi:E\to E'$ is an isogeny of supersingular elliptic curves defined over $\Fp$. Say $\End_{\Fp}(E)\cong \OO_1$ and $\End_{\Fp}(E')\cong \OO_2$, with $\OO_i\in\{\ZZ[\sqrt{-p}], \ZZ[\frac{1 + \sqrt{-p}}{2}]\}$. 
\begin{itemize}
    \item If $\OO_1 = \OO_2$, $\varphi$ is called \emph{horizontal.}
    \item If $\OO_1 \supsetneq \OO_2$, $\varphi$ is called \emph{descending}.
    \item If $\OO_1 \subsetneq \OO_2$, $\varphi$ is called \emph{ascending}.
\end{itemize}
If the degree of $\varphi$ is coprime to $p$ and $\OO_1\neq\OO_2$, then the smaller of the two orders has index-$\deg\varphi$ in the larger order.
\end{Definition}

\begin{Lemma}\label{lem:Fpisogenies}
The number of $\Fp$-rational $\ell$-isogenies is determined by the factorisation of $x^2 + p$ modulo $\ell$.  

If $\ell\not\in\{2,p\}$:
 \begin{enumerate}
     \item If $\left(\frac{-p}{\ell}\right) = 1$, then each supersingular elliptic curve $E/\Fp$ has exactly two $\Fp$-rational $\ell$-isogenies.

     \item If $\left(\frac{-p}{\ell}\right) = -1$, then there are no $\Fp$-rational $\ell$-isogenies of supersingular elliptic curves over $\Fp$.
 \end{enumerate}

 If $\ell = 2$, the value of the Kronecker symbol $\left(\frac{-p}{2}\right)$ is determined by the equivalence class of $p\pmod{8}$:
 \begin{enumerate}
     \item If $p\equiv 1\pmod{4}$, each supersingular elliptic curve has one $\Fp$-rational 2-isogeny. 

     \item If $p\equiv 3\pmod{4}$, each supersingular elliptic curve with $\End_{\Fp}(E)\cong\ZZ\left[\frac{1 + \sqrt{-p}}{2}\right]$ has three $\Fp$-rational $2$-isogenies and each with $\End_{\Fp}(E)\cong\ZZ[\sqrt{-p}]$ has one $\Fp$-rational $2$-isogeny.
 \end{enumerate}
\end{Lemma}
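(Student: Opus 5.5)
An $\Fp$-rational $\ell$-isogeny from $E$ is the same thing as a cyclic subgroup $K\subset E[\ell]$ of order $\ell$ that is stable under the $p$-power Frobenius $\pi=\pfrob$. So I would count the $\pi$-stable lines in $E[\ell]\cong(\ZZ/\ell\ZZ)^2$, i.e. the eigenlines of $\pi$ acting on $E[\ell]$. The key input is that $E/\Fp$ is supersingular with $p>3$, so $\tr\pi=0$ and $\pi^2=-p$ in $\End(E)$. Hence the characteristic polynomial of $\pi$ on $E[\ell]$ is $x^2+p$, which explains the first sentence of the statement.

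\emph{Case $\ell\notin\{2,p\}$.} Since $\ell\nmid 2p$, the polynomial $x^2+p$ has nonzero discriminant $-4p$ modulo $\ell$. If $\left(\frac{-p}{\ell}\right)=1$, it has two distinct roots in $\FF_\ell$. Then $\pi$ is diagonalizable on $E[\ell]$ with two distinct eigenvalues, so it has exactly two stable lines, giving two $\Fp$-rational $\ell$-isogenies. If $\left(\frac{-p}{\ell}\right)=-1$, then $\pi$ has no eigenvalue in $\FF_\ell$, so there is no stable line and no rational isogeny.

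\emph{Case $\ell=2$.} Now $x^2+p\equiv(x+1)^2 \pmod 2$, so $\pi\equiv 1+N$ on $E[2]$ with $N$ nilpotent. There are two sub-cases.
\begin{itemize}
\item If $\pi$ acts as the identity on $E[2]$, all three lines are stable, giving three rational $2$-isogenies.
\item Otherwise $\pi$ is a nontrivial unipotent matrix on $E[2]$, which fixes exactly one line, giving one rational $2$-isogeny.
\end{itemize}
Also, $\pi$ acts trivially on $E[2]$ exactly when $\pi-1$ kills $E[2]$, i.e. when $(\pi-1)/2\in\End(E)$. Since $(\pi-1)/2$ commutes with $\pi$ and is defined over $\Fp$, this means $(\pi-1)/2\in\End_{\Fp}(E)$. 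In terms of the two possibilities in \eqref{eq:fpendoring}, this is equivalent to $\End_{\Fp}(E)\cong\ZZ\left[\frac{1+\sqrt{-p}}{2}\right]$. The larger order requires $-p\equiv1\pmod 4$, i.e. $p\equiv3\pmod4$. Therefore:
\begin{itemize}
\item if $p\equiv1\pmod4$, the $\Fp$-endomorphism ring is always $\ZZ[\sqrt{-p}]$, and every curve has exactly one rational $2$-isogeny;
\item if $p\equiv 3\pmod 4$, curves with the larger ring have three rational $2$-isogenies, and curves with $\ZZ[\sqrt{-p}]$ have one.
\end{itemize}

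The step I expect to need the most care is the equivalence "$\pi$ acts trivially on $E[2]$ $\iff$ $(1+\pi)/2\in\End_{\Fp}(E)$". One direction is the standard fact that an isogeny vanishing on $E[2]$ factors through $[2]$. For the other, I must check that $[2]$-divisibility of $\pi-1$ in $\End(E)$ actually lands in $\End_{\Fp}(E)$. This holds because the quotient is determined by $\pi-1$ and commutes with $\pi$, hence with Galois. An alternative route is to cite \cite{_D_G_}, where the structure of these endomorphism rings is stated directly.
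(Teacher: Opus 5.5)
Your proof is correct, and it is more self-contained than the paper's. Both arguments start from the same observation: $\Fp$-rational $\ell$-isogenies correspond to Frobenius-stable lines in $E[\ell]$. From there the two routes diverge.

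The paper reads the count off from the splitting of $(\ell)$ in $\End_{\Fp}(E)$. For odd $\ell$, the splitting is the same in $\ZZ[\sqrt{-p}]$ and $\ZZ[\frac{1+\sqrt{-p}}{2}]$, and $(\ell)$ is never ramified. For $\ell=2$, it separates two cases: $(2)$ ramified when $\OO_K=\ZZ[\sqrt{-p}]$, and $(2)$ inert or split when $\OO_K=\ZZ[\frac{1+\sqrt{-p}}{2}]$, where $(2)$ is also the conductor of $\ZZ[\sqrt{-p}]$. The details (the isogeny--ideal correspondence, and horizontal versus ascending/descending edges) are deferred to Delfs--Galbraith.

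You instead obtain everything from two ingredients:
\begin{itemize}
\item the characteristic polynomial $x^2+p$ of $\pi$ on $E[\ell]$;
\item for $\ell=2$, the criterion that $\pi$ acts trivially on $E[2]$ iff $(\pi-1)/2\in\End(E)$. This uses factorisation through the separable map $[2]$. Rationality of the quotient follows from its uniqueness (torsion-freeness of $\End(E)$), and then you use the classification of orders containing $\ZZ[\pi]$.
\end{itemize}
Your route needs no ideal theory. It also makes explicit a point the statement's first sentence glosses over: for $\ell=2$ the factorisation $(x+1)^2$ alone does not determine the count. The extra input is whether $\frac{1+\pi}{2}$ is integral.

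The paper's route delivers more than the lemma asks for, because it sorts the isogenies by type:
\begin{itemize}
\item for odd $\ell$, the two isogenies are horizontal and correspond to the two primes above $\ell$;
\item from a curve with $\End_{\Fp}(E)\cong\OO_K$, the three $2$-isogenies are two horizontal and one descending when $p\equiv 7\pmod 8$, and three descending when $p\equiv 3\pmod 8$.
\end{itemize}
This typing is exactly what is used later in Corollary~\ref{horizontalscor} and in the cycle description of $\mathcal{G}_\ell(\Fp)$.
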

\begin{proof}
The proof follows from studying the action of the $p$-power Frobenius on $E[\ell]$ in each of the following cases. A fixed linear subspace of $E[\ell]$ corresponds to the kernel of a cyclic $\ell$-isogeny which is defined over $\Fp$. 
For $\ell\neq 2$, the splitting behaviour of $(\ell)$ as an ideal of $\ZZ[\frac{1 + \sqrt{-p}}{2}]$ is the same as it is in $\ZZ[\sqrt{-p}]$, and $(\ell)$ is never ramified. For $\ell = 2$, we split into two cases based on the possibilities for the ring of integers of $K\coloneqq\QQ(\sqrt{-p})$. If $\OO_K = \ZZ[\sqrt{-p}]$, then $(2)$ is ramified. If $\OO_K = \ZZ[\frac{1 + \sqrt{-p}}{2}]$, then $(2)$ is either inert or split, and then $(2)$ is the conductor ideal of $\ZZ[\sqrt{-p}]$. 
For proof details, see \cite[Ch. 2]{_D_G_}. 
\end{proof}

Twisting endomorphisms arise from a particular type of edge in $\mathcal{G}_{\ell}(\mathbb{F}_{p})$: an edge between $\Fp$-twists in a folding connected component of the graph $\mathcal{G}_{\ell}(\mathbb{F}_{p})$.

\subsection{Twisting endomorphisms}
The full ring of endomorphisms of a supersingular elliptic curve over $\Fpbar$ is isomorphic to a maximal order in a quaternion algebra $B_{p,\infty}$ ramified precisely at $p$ and $\infty$, and $\End_{\Fp}(E)$ embeds into this quaternion order. The quaternion algebra $B_{p,\infty}$ is unique up to isomorphism, and (for $p>3$) can be represented as the algebra generated by $1,i,j,ij$ over $\QQ$ according to the relations $ij = -ji$, $j^2 = -p$ and 
\begin{equation}\label{eq:iinBpinfty}
i^2 = \begin{cases}
    -1 &\text{ if }p\equiv 3\pmod{4}\\
    -2 &\text{ if }p\equiv 5\pmod{8}\\
    -q &\text{ if }p\equiv 1\pmod{8},
\end{cases}\end{equation}
where $q\equiv 3\pmod{4}$ and $\left(\frac{p}{q}\right) = -1$. See \cite[Prop. 5.1]{_Pizer_} for a proof.

Since $\ZZ[\sqrt{-p}]\cong\ZZ[\pfrob]\subset\End_{\Fp}(E)$ for every supersingular elliptic curve $E/\Fp$, we can embed $\ZZ[\pfrob]\hookrightarrow\End(E)$ by sending $\pfrob\mapsto j$. This embedding can be extended linearly to an embedding of $\iota_{\pfrob}:\QQ(\sqrt{-p})\hookrightarrow B_{p,\infty}$. 
One choice of embedding $\QQ(\sqrt{-p})\hookrightarrow B_{p,\infty}$ gives rise to a conjugate embedding:
\[\iota_1:\QQ(\sqrt{-p})\hookrightarrow B_{p,\infty},\,\quad\quad \sqrt{-p}\mapsto j; \]
\[\iota_2:\QQ(\sqrt{-p})\hookrightarrow B_{p,\infty},\,\quad\quad \sqrt{-p}\mapsto -j. \]
Since $i^{-1}ji = -j$, these embeddings are conjugate by $i\in B_{p,\infty}^\times$.

A twisting endomorphism of a supersingular elliptic curve $E/\Fp$ of degree $d$ corresponds to an element $\alpha =i(a_2 + a_4j)\in\End(E)$, with $N(\alpha) = d$. Notice:
\[ji(a_2 + a_4j) = -ij(a_2 + a_4j) = -i(a_2 +a_4j)j,\]
or, equivalently in the language of endomorphisms:
\[\pfrob \circ \alpha = -\alpha\circ\pfrob.\]
Twisting endomorphisms were first defined and studied in \cite{_C_P_V_}.
\begin{Definition}\label{def:twistingend}
Let $E$ be a supersingular elliptic curve over $\mathbb{F}_p$. A twisting endomorphism of $E$ is an endomorphism $\alpha\in \End(E)$ such that  
\[
\pfrob \circ \alpha = - \alpha \circ \pfrob .
\]
\end{Definition}

In Section \ref{sec:Otwisting}, we define and study $\OO$-twisting endomorphisms of an $\OO$-oriented supersingular elliptic curve $(E,\iota)$.
In Section \ref{Determination} we assume  $\ell\ne p, \, p>3,\, \ell>2,\,  (\frac{-p}{\ell})=1,\, p\equiv3\pmod{4}$ and obtain a characterisation of the existence of twisting endomorphisms of degree $\ell$ in terms of $\mathbb{F}_p$-rational $\ell$-isogenies. 
In Section \ref{search} we give our algorithm. 
In Section \ref{full} we give an example of a supersingular elliptic curve $E$ over $\mathbb{F}_{439}$ with a twisting endomorphism $\alpha$ of degree $5$ and $j_E\ne 1728$, and we then show how to compute a basis of the full endomorphism ring of $E$.

\section{\texorpdfstring{$\OO$}{O}-twisting endomorphisms}\label{sec:Otwisting}
Supersingular elliptic curves over $\Fp$ are precisely the supersingular elliptic curves over $\Fpbar$ for which the $p$-power Frobenius map is an endomorphism, say $\pfrob$ is the $p$-power Frobenius endomorphism of a supersingular elliptic curve $E/\Fp$. 
Twisting endomorphisms $\alpha$ as above satisfy 
\[\pfrob\circ \alpha = -\alpha\circ\pfrob,\]
as in Definition~\ref{def:twistingend}. To generalize the notion of twisting endomorphism, we first generalize to the setting of orientations. A supersingular elliptic curve $E/\Fp$ with $\pfrob\in\End(E)$ is an elliptic curve with a $\mathbb{Q}(\sqrt{-p})$-\emph{orientation}, which is primitive with respect to either the order $\ZZ[\sqrt{-p}]$ or $\mathcal{O}_K$, see \eqref{eq:fpendoring}. We briefly recall the framework of orientations on supersingular elliptic curves. 
We refer the reader to \cite{_C_K_,_O_} for more details on the general theory of oriented supersingular elliptic curves. 

For the definitions which follow, let $K$ be an imaginary quadratic field in which $p$ is not split, and let $E/\Fpbar$ be a supersingular elliptic curve. 

\begin{Definition}[$K$-orientation]
    A $K$-orientation on $E$ is an embedding
    \[\iota:K\hookrightarrow\End(E)\otimes_\ZZ \QQ,\]
    given by specifying an image of a $\QQ$-generator of $K$ as an element of the quaternion algebra $\End(E)\otimes_\ZZ\QQ$. The orientation $\iota$ is $\OO$-primitive for the imaginary quadratic order $\OO\subseteq K$ such that
    \[\iota(K)\cap\End(E) = \iota(\OO).\]
    As the orientations we consider will be primitive, we will refer to $(E,\iota)$ simply as an $\OO$-oriented supersingular elliptic curve, and we will drop the adjective primitive. 
\end{Definition}

\begin{Definition}[Conjugate orientation]
    For every $\OO$-oriented supersingular elliptic curve $(E,\iota)$, we define a conjugate orientation $\widehat{\iota}$ as follows:
\[\widehat{\iota}(\omega) = \iota(\overline{\omega})\,\text{ for all }\omega\in\OO,\]
where $\overline{\omega}$ denotes the Galois conjugate of the imaginary quadratic element $\omega\in\OO$.
The pair $(E,\widehat{\iota})$ is also an $\OO$-oriented supersingular elliptic curve.
\end{Definition}

\begin{Definition}[$\OO$-oriented isogeny]
    An isogeny $\varphi:E \to E'$ on an $\mathcal{O}$-oriented elliptic curve $(E,\iota)$ induces an orientation $\iota_*$ on the codomain $E'$ as follows:
\[\iota_* ( - ) = \frac{1}{[\deg\varphi]}\varphi\circ\iota(-)\circ\widehat{\varphi}.\]
The resulting pair $(E',\iota_*)$ is a $K$-oriented elliptic curve, but this orientation $\iota_*$ may or may not still be $\mathcal{O}$-primitive. 
\end{Definition}

\begin{Definition}[Isomorphisms of $\OO$-orientations]
Two $\OO$-oriented supersingular elliptic curves over $\Fpbar$ $(E,\iota)$, $(E',\iota')$ are isomorphic if there exists an isomorphism $\eta:E\to E'$ such that
\[\iota'(-) = \eta\circ\iota(-)\circ\eta^{-1}.\]
\end{Definition}

\begin{Example}[Orders of the form {$\ZZ[\sqrt{-d}]$}]
    Let $\OO = \ZZ[\sqrt{-d}]$ for some square-free integer $d\in\ZZ_{>0}$. Suppose $(E,\iota)$ is an $\OO$-oriented supersingular elliptic curve, where $\iota$ is specified by the image $\iota(\sqrt{-d})\in\End(E)$. The conjugate orientation is defined 
    \[\widehat{\iota}:K\hookrightarrow\End(E)\otimes_\ZZ\QQ,\]
    \[\widehat{\iota}(\sqrt{-d}) = \iota(-\sqrt{-d}).\]
    The $\OO$-oriented supersingular elliptic curves $(E,\widehat{\iota})$, $(E,\iota)$ are isomorphic if and only if there exists $\eta\in\Aut(E)$ such that
    \[\widehat{\iota}(\sqrt{-d})\circ\eta = \eta\circ\iota(-\sqrt{-d}) = -\eta\circ\iota(\sqrt{-d}).\]
\end{Example}

\begin{Example}[Elliptic curves over $\Fp$ as oriented elliptic curves]
    Let $E/\Fp$ be a supersingular elliptic curve, and let $\pfrob$ denote the $p$-power Frobenius endomorphism of $E$. The $\Fp$-rational endomorphism ring of $E$ takes one of the following two forms:
    \[\End_{\Fp}(E) \cong \begin{cases}
        \ZZ[\frac{1+\sqrt{-p}}{2}] & \text{ possible only if }p\equiv 3\pmod{4},\\
        \ZZ[\sqrt{-p}] & \text{ possible for any prime $p$. }
    \end{cases}\]
    In particular, $E$ admits a $\QQ(\sqrt{-p})$-orientation, given by 
    \[\iota:\QQ(\sqrt{-p})\to\End(E)\otimes_\ZZ\QQ,\]
    \[\iota(\sqrt{-p}) = \pfrob.\]
    The conjugate of this orientation is a non-isomorphic orientation on $E$:
    \[\widehat{\iota}:\QQ(\sqrt{-p})\to\End(E)\otimes_\ZZ\QQ,\]
    \[\widehat{\iota}(-\sqrt{-p}) = \pfrob.\]
    The orientations $\iota$ and $\widehat{\iota}$ are not equivalent for $j(E)\neq1728$, as there is no automorphism $\eta\in\Aut(E)$ such that: 
    \begin{equation}\label{eq:Fpexample}
        \widehat{\iota}(\sqrt{-p})\circ\eta = \eta \circ\iota(\sqrt{-p}) \Leftrightarrow -\pfrob\circ\eta = \eta\circ\pfrob.
    \end{equation}
    This inspires the consideration of twisting endomorphisms: While it may be difficult to find an automorphism satisfying \eqref{eq:Fpexample}, it may be possible to find an endomorphism satisfying \eqref{eq:Fpexample}. 
\end{Example}

We fix the following notation for the remainder of this section: let $\mathcal{O}$ denote an order in an imaginary quadratic field, say $\mathcal{O} = \ZZ[\omega]$. Let $(E,\iota)$ denote a (primitively) $\mathcal{O}$-oriented supersingular elliptic curve over $\Fpbar$. 
\begin{Definition}[{$\mathcal{O}$-twisting endomorphism}]\label{def:otwistingendo}
    An endomorphism $\alpha\in\End(E)$ is an $\mathcal{O}$-twisting endomorphism if:
    \[\widehat{\iota}(\omega)\circ\alpha = \alpha\circ\iota(\omega).\]
\end{Definition}

To further the argument that $\OO$-oriented supersingular elliptic curves provide a natural generalization of supersingular elliptic curves, we introduce the notion of $\OO$-oriented twists. There are precisely two $\Fp$-isomorphism classes of supersingular elliptic curves over $\Fp$ for each supersingular $j$-invariant $j\in\Fp$, corresponding to the isomorphism classes of twists (quadratic if $j\neq 1728$ or quartic if $j = 1728$). 
Since an orientation $\iota$ also admits a conjugate orientation $\widehat{\iota}$, the $\OO$-oriented curve $(E,\widehat{\iota})$ is a natural ``twist'' to $(E,\iota)$.

\begin{Definition}[{$\mathcal{O}$-oriented twists}]
Let $(E,\iota)$ be an $\mathcal{O}$-oriented supersingular elliptic curve over $\Fpbar$. The $\mathcal{O}$-oriented twist of this curve is defined to be $(E,\widehat{\iota})$.
\end{Definition}

For $\mathcal{O} = \mathbb{Z}[\sqrt{-p}]$, this corresponds to the notion of $\Fp$-twist: one can think of $E,E^t$ as corresponding to the two choices of root of the minimal polynomial $x^2 + p$ of the $p$-power Frobenius map of a supersingular elliptic curve $E/\Fp$. Likewise, $(E,\iota),(E,\widehat{\iota})$ correspond to the two choices of root of the minimal polynomial of the generator $\omega$ of $\mathcal{O} = \ZZ[\omega]$.

\begin{Theorem}
    Let $(E,\iota)$ be an $\OO$-oriented supersingular elliptic curve over $\Fpbar$. Let $n\in\mathbb{Z}_{\geq1}$ be a positive integer. 
    There exists an $\OO$-twisting endomorphism $\alpha$ of $E$ of degree $n$ if and only if there exists a degree-$n$ $\OO$-oriented isogeny $(E,\iota)$ to $(E,\widehat{\iota})$.
\end{Theorem}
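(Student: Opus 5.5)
The plan is to show that the two conditions are literally the same condition on an endomorphism $\alpha$, read through the definition of the induced orientation. An isogeny from $E$ to $E$ is an element $\alpha\in\End(E)$. By definition, a degree-$n$ $\OO$-oriented isogeny $(E,\iota)\to(E,\widehat{\iota})$ is such an $\alpha$, of degree $n$, whose induced orientation $\iota_*$ coincides with $\widehat{\iota}$. One could also allow the codomain to be $(E,\widehat\iota)$ only up to isomorphism of $\OO$-oriented curves; this case is handled in the last paragraph. So I would first fix $\alpha\in\End(E)$ of degree $n$ and prove
\[\iota_*=\widehat{\iota}\iff \widehat{\iota}(\omega)\circ\alpha=\alpha\circ\iota(\omega).\]
Since $K=\QQ(\omega)$ and both orientations are $\QQ$-linear ring embeddings determined by the image of $\omega$, it suffices to compare the images of $\omega$.

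For the key computation, write $\iota_*(\omega)=\frac{1}{n}\,\alpha\circ\iota(\omega)\circ\widehat{\alpha}$, which lives in $\End(E)\otimes\QQ$. Suppose first that $\alpha$ is twisting. Then
\[\iota_*(\omega)=\frac1n\,\widehat\iota(\omega)\circ\alpha\circ\widehat\alpha=\frac1n\,\widehat\iota(\omega)\circ[n]=\widehat\iota(\omega),\]
because $[n]$ is central. Conversely, suppose $\frac1n\,\alpha\,\iota(\omega)\,\widehat\alpha=\widehat\iota(\omega)$. Right-multiply by $\alpha$ and use $\widehat\alpha\alpha=[n]$ to get $\alpha\,\iota(\omega)=\widehat\iota(\omega)\,\alpha$. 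Here $n\ge1$, so $\alpha\neq0$ is invertible in the quaternion algebra, and the division by $n$ is legitimate. This gives the equivalence for $\alpha$ itself.

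The main obstacle is the convention for the target: whether "isogeny to $(E,\widehat\iota)$" means equality of orientations or only isomorphism of oriented curves. In the latter case, the hypothesis gives $\alpha$ with $\eta\circ\iota_*(-)\circ\eta^{-1}=\widehat\iota(-)$ for some $\eta\in\Aut(E)$. I would replace $\alpha$ by $\alpha'=\eta\circ\alpha$. This still has degree $n$, since $\deg\eta=1$, and its induced orientation is $\eta\,\iota_*\,\eta^{-1}=\widehat\iota$, using $\widehat{\eta\alpha}=\widehat\alpha\,\widehat\eta$ and $\widehat\eta=\eta^{-1}$. Applying the computation above to $\alpha'$ then produces a twisting endomorphism of degree $n$. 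The converse direction needs no modification.

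Finally, I would note that primitivity poses no issue. The induced orientation equals $\widehat\iota$, which is $\OO$-primitive because $\widehat\iota(K)=\iota(K)$ and $\widehat\iota(\OO)=\iota(\overline{\OO})=\iota(\OO)$. Hence $\alpha$ is genuinely an $\OO$-oriented isogeny in the sense of the definition.
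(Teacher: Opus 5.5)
Your proof is correct and follows the paper's argument: in both directions you unpack $\iota_*(\omega)=\frac{1}{n}\,\alpha\circ\iota(\omega)\circ\widehat{\alpha}$ and use $\alpha\circ\widehat{\alpha}=\widehat{\alpha}\circ\alpha=[n]$. Your converse, which right-multiplies by $\alpha$, is slightly more direct than the paper's route (evaluate at $\overline{\omega}$, post-compose with $\widehat{\alpha}$, then take duals); your remarks on the isomorphism-of-orientations convention and on primitivity are sound additions that the paper leaves implicit.
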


\begin{proof}
    Take $\alpha\in\End(E)$ and suppose $\alpha$ is a degree-$n$ twisting endomorphism with respect to the orientation $\iota$. 
    Consider the orientation $\iota_*$ on $E$, induced by the isogeny $\alpha:E \to E$:
    \[\iota_*(-) = \frac{1}{[\deg\alpha]}\alpha\circ\iota(-)\circ\widehat{\alpha}.\]
    As $\iota_*$ is completely determined by the image of $\omega$, we see:
    \begin{equation}
    \begin{split}
        \iota_*(\omega) &= \frac{1}{[\deg\alpha]}\alpha\circ\iota(\omega)\circ\widehat{\alpha}\\
        &=\frac{1}{[\deg\alpha]}\widehat{\iota}(\omega)\circ\alpha\circ\widehat{\alpha}\\
        &=\widehat{\iota}(\omega)
        \end{split}
    \end{equation}
    By definition, $\alpha$ is an isogeny from $(E,\iota)$ to $(E,\widehat{\iota})$. 

    The converse follows quickly from reversing the computation above. Let $\overline{\omega}$ denote the complex conjugate of $\omega$ in $\OO$. If $\alpha:(E,\iota)\to(E,\widehat\iota)$, then:
    \[\widehat\iota(\overline{\omega}) = \frac{1}{[\deg\alpha]}\alpha\circ\iota(\overline{\omega})\circ\widehat{\alpha}.\]
    Post-compose with $\widehat\alpha$ and take the dual of both sides (note $\widehat{\iota(\overline{\omega})}=\iota(\omega)$):
    \[\widehat\iota(\omega)\circ\alpha = \alpha\circ\iota(\omega),\]
    so $\alpha$ is by definition an $\OO$-twisting isogeny.
    
\end{proof}
Twisting endomorphisms and $\OO$-twisting endomorphisms are in general large degree: this comes from the nature of the maximal order $\End(E)$. In Example~\ref{ex:OOtwisting}, we explicitly compute the Deuring correspondence to make the task of finding $\OO$-twisting endomorphisms as easy as possible. 

\begin{Example}[$\OO$-twisting endomorphism]\label{ex:OOtwisting}
    Let $E:y^2 =x^3 + 9x + 19$ be defined over $\FF_{29^2}\coloneqq \FF_{29}[s]/(s^2 +s + 1)$. The endomorphism ring of $E$ is a maximal order in the quaternion algebra $B_{29,\infty}\coloneqq \End(E)\otimes_\ZZ \QQ$, generated by $i,j,ij$ with $i^2 = -2$, $j^2 = -29$, and $ij = -ji$. In particular\footnote{This correspondence is easy to compute because $p$ is so small: one can compute the collection of all maximal orders and find the connecting ideals of reduced norm 2, and line these relationships up with the 2-isogeny graph over $\overline{F}_{29}$.}:
    \begin{equation}\label{eq:p29quat}\End(E)\cong M\coloneqq \ZZ\langle1, i, 1/2 - i/4 + ij/4, -1/2 + i/2 - j/2 \rangle \subset B_{29,\infty}.\end{equation}
    The elliptic curve $E$ has two endomorphisms $\eta,\eta'$ of norm 31 and trace 0, corresponding to the elements $\pm (i + j) \in M$. Without loss of generality, suppose $\eta$ corresponds to $i + j$ and $\eta'$ corresponds to $-i-j$ under the identification in \eqref{eq:p29quat}. Let $K \coloneqq \QQ(r)/(r^2 + 31)$ be the number field of discriminant $-31$, and let $\OO\coloneqq \ZZ[r]$ denote the order in $K$ generated by $r$. The two elements $\eta,\eta'$ are the two choices for $\OO$-orientations on $E$, say:
    \[\iota:K\hookrightarrow\End(E)\otimes_\ZZ \QQ\]
    \[\iota(r) = \eta.\]
    It follows that $\widehat{\iota}(r) = \eta'$. To find an $\OO$-twisting endomorphism of $E$ is to find an endomorphism $\alpha\in\End(E)$ such that:
    \[\eta'\circ\alpha = \alpha\circ\eta.\]
    The endomorphism $\alpha$ will correspond to an element $a$ of $M$ satisfying:
    \[(-i-j)\cdot a = a\cdot (i + j).\]
    Since we have an explicit basis for $M$, we can solve for such an element $a$ explicitly: a $\ZZ$-basis for solutions in $M$ is given by 
    $$\{ij, 29i/4 - j/2 - 27ij/4\}.$$  
    In particular, the element $ij$ of reduced norm $N(ij) = 58$ gives a solution:
    \[(-i - j)ij = -iij -jij = iji + ijj = ij(i + j).\]
    Since $p=29$ divides $N(ij)$, the corresponding isogeny necessarily factors through Frobenius.
    
    \begin{figure}
        \centering
            \begin{tikzpicture}
                \node[] (Eiota) at (0,0) {$(E,\iota)$};
                \node[] (Eiotap) at (3,0) {$(E,\iota^p)$};
                \node[] (Eiotahat) at (6,0) {$(E,\widehat{\iota})$};

                \draw[->] (Eiota) to node[above] {$\alpha_j$} (Eiotap);
                \draw[->] (Eiotap) to node[above] {$\alpha_i$} (Eiotahat);
                \draw[->] (Eiota) to[bend right=20] node[below] {$\alpha$} (Eiotahat);

                \node[] (Eiotaim) at (0,-1.75) {$i+j$};
                \node[] (Eiotapim) at (3,-1.75) {$-i + j$};
                \node[] (Eiotahatim) at (6,-1.75) {$-i-j$};
                \draw[->] (Eiotaim) to (Eiotapim);
                \draw[->] (Eiotapim) to (Eiotahatim);
            \end{tikzpicture}
        \caption{$\OO$-twisting endomorphism diagram to accompany Example~\ref{ex:OOtwisting}. At the top is the factorisation of the $\OO$-twisting endomorphism and the corresponding image orientations. Below, the image $\iota(r)$ of the generator of $\OO$ is given, as an element of the quaternion order $M\cong\End(E)$.}
        \label{fig:OOtwistisog}
    \end{figure}
\end{Example}
In Example~\ref{ex:OOtwisting}, we remark that the order $\OO$ by which $E$ is oriented does not contain a Frobenius element, and so any $\OO$-twisting endomorphism must necessarily factor through Frobenius.  

The benefit of $\OO$-twisting endomorphisms is flexibility: these endomorphisms are not necessarily of elliptic curves defined over $\Fp$, so there are more options. However, this does make the matter of detecting such endomorphisms more difficult. We do not have modular polynomials for $\OO$-oriented elliptic curves, so instead provide a characterisation based on the quaternion side of the picture.

\begin{Theorem}\label{thm:everyOO}
    Every $\OO$-oriented supersingular elliptic curve $E/\Fpbar$ admits an $\OO$-twisting endomorphism.
\end{Theorem}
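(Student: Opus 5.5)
Our plan is to work entirely on the quaternion side. We fix $B \coloneqq \End(E)\otimes_\ZZ\QQ\cong B_{p,\infty}$ and write $\beta \coloneqq \iota(\omega)\in\End(E)$. Then $\widehat{\iota}(\omega)=\iota(\overline{\omega})=\tr(\omega)-\beta$. We may replace $\omega$ by $\omega-\lfloor\tr(\omega)/2\rfloor$ without changing $\OO$ or the twisting condition; the condition is linear in $\iota(\omega)$, and shifting by an integer just adds a commuting scalar. It therefore suffices to find $\alpha\in\End(E)$, $\alpha\neq 0$, with
\[(\tr(\omega)-\beta)\,\alpha=\alpha\,\beta .\]

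First we treat the case $\tr(\omega)=0$, i.e.\ $\beta^2=-d$ with $\overline{\beta}=-\beta$. The condition then reads $\beta\alpha=-\alpha\beta$. The set $V=\{x\in B:\beta x=-x\beta\}$ is a $2$-dimensional $\QQ$-subspace of $B$. To see this, write $B=\QQ(\beta)\oplus\QQ(\beta)\,\gamma$ for any $\gamma$ anticommuting with $\beta$; such a $\gamma$ exists by Skolem--Noether, since conjugation $\beta\mapsto-\beta$ is an automorphism of $\QQ(\beta)$ and hence is realized by some $\gamma\in B^\times$. Then $V=\QQ(\beta)\gamma$. Because $\End(E)$ is a lattice of full rank in $B$, some integer multiple $m\gamma$ lies in $\End(E)$, and $\alpha\coloneqq m\gamma$ is a nonzero twisting endomorphism. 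Explicitly, we may take $\alpha=\beta x-x\overline{\beta}=\beta x+x\beta$ for any $x\in\End(E)$ for which this is nonzero. This element visibly lies in $\End(E)$ and anticommutes with $\beta$, as one checks using $\beta^2=-d$.

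For the general case, with $t=\tr(\omega)$, we use the same trick: for $x\in\End(E)$ set
\[\alpha_x\coloneqq \overline{\beta}\,x + x\,\beta - t\,x ,\]
or more invariantly $\alpha_x=\overline{\beta}x - x\overline{\beta}$ up to sign. We then verify $\overline{\beta}\alpha_x=\alpha_x\beta$ using $\beta\overline{\beta}=N$ and $\beta+\overline{\beta}=t$. For example, $\alpha_x=\overline\beta x-x\overline\beta$ gives $\overline\beta\alpha_x=\overline\beta^2x-\overline\beta x\overline\beta$ and $\alpha_x\beta=\overline\beta x\beta - xN$; if these do not match, the correct combination comes from the $2$-dimensional eigenspace described above. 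Since $\End(E)$ contains $\ZZ[\beta]$, and $\overline{\beta}=t-\beta\in\End(E)$, each $\alpha_x$ is integral.

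The main obstacle is nonvanishing: we must show some $x\in\End(E)$ gives $\alpha_x\neq0$. If $\alpha_x=0$ for all $x$, then the $\QQ$-linear map $x\mapsto\alpha_x$ on $B$ would vanish on a full-rank lattice and hence on all of $B$. But evaluating it at the Skolem--Noether element $\gamma$ (with $\gamma\beta\gamma^{-1}=\overline{\beta}$) gives a nonzero value, since $B$ is noncommutative and $\beta\notin\QQ$. So some basis element of $\End(E)$ yields a nonzero $\OO$-twisting endomorphism, which completes the argument.
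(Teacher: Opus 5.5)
Your trace-zero case is essentially the paper's whole proof. In your notation the paper sets $u=\beta-\tfrac{t}{2}\in B$ with $t=\tr(\omega)$, rewrites the twisting condition as $ux+xu=0$, asserts that this solution space is $2$-dimensional, and rescales a nonzero rational solution into the full-rank lattice $\End(E)$. The lattice argument never needs $u$ to be integral, so this rational shift reduces every trace to zero at once. Your integer shift only reaches trace $0$ or $1$, which is why you needed a second case. Applying your Skolem--Noether decomposition $B=\QQ(\beta)\oplus\QQ(\beta)\gamma$ to $u$ would already have finished the proof, and it also justifies the dimension claim the paper simply asserts.

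Two points in your write-up need repair. First, the ``explicit'' element in the trace-zero case is wrong: $\beta x+x\beta$ commutes with $\beta$ rather than anticommuting. For $x=1$ it is $2\beta$, and for $x=a+b\gamma$ with $a,b\in\QQ(\beta)$ it equals $2a\beta$. The correct choice is the commutator $\beta x-x\beta$. This is harmless, because $m\gamma$ already settles that case. Second, in the general case you stop before finishing the verification and fall back on an eigenspace you only analysed for trace zero. The formula is in fact correct, and the missing line is
\[
\overline{\beta}\alpha_x-\alpha_x\beta=\overline{\beta}^{2}x-\overline{\beta}x(\overline{\beta}+\beta)+x\,\overline{\beta}\beta=(\overline{\beta}^{2}-t\overline{\beta}+N)\,x=0,
\]
since $\overline{\beta}$ is a root of $X^2-tX+N$.

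With that line added, your general case is a genuinely different and somewhat stronger argument than the paper's. Since $\alpha_x=\overline{\beta}x-x\overline{\beta}=x\beta-\beta x$, you are using the commutator map $x\mapsto x\beta-\beta x$. It sends $\End(E)$ to integral $\OO$-twisting elements with no case split and no rescaling. Its kernel on $B$ is the centralizer $\QQ(\beta)=\iota(K)$, so by primitivity $\alpha_x\neq 0$ exactly when $x\in\End(E)\setminus\iota(\OO)$; this makes your nonvanishing step immediate. Moreover, the image of this map on $B$ is $2$-dimensional and lies in the $2$-dimensional twisting subspace, so the twisting elements of $B$ are precisely these commutators. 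The paper's route needs only a dimension count and a scaling argument. Yours is explicit and produces $\OO$-twisting endomorphisms directly from any $\ZZ$-basis of $\End(E)$.
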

\begin{proof}
    Let $\iota:\OO\hookrightarrow\End(E)$ be an $\OO$-orientation on $E$ for some imaginary quadratic order $\OO$, say $\OO \cong \ZZ[\omega]$ and $\iota(\omega) = \alpha$. Then, $\widehat{\iota}(\omega) = \widehat{\alpha}$. Set $u\coloneqq \alpha - \frac{Tr(\alpha)}{2}\in \End(E)\otimes_\ZZ\QQ$, where $\End(E)\otimes_\ZZ\QQ$ is a quaternion algebra over $\QQ$ ramified at $p$ and $\infty$ by the Deuring correspondence. An $\OO$-twisting endomorphism $\beta\in\End(E)$ would satisfy
    \[\widehat{\alpha}\circ\beta = \beta\circ\alpha.\]
    Searching for such a $\beta$ in $\End(E)\otimes_\ZZ \QQ$, we are looking for a $\beta$ which satisfies
    \[(\frac{Tr(\alpha)}{2} - u)\circ\beta = \beta\circ(\frac{Tr(\alpha)}{2} + u) \Leftrightarrow-u\circ\beta = \beta\circ u.\]
    Let $K\coloneqq \ker(x\mapsto u\circ x + x \circ u)\subseteq\End(E)\otimes_\ZZ \QQ$. This $K$ is a two-dimensional $\QQ$-vector subspace of $\End(E)\otimes_\ZZ \QQ$. Since $\End(E)$ is a full-rank lattice inside of $\End(E)\otimes_\ZZ \QQ$, it has nontrivial intersection with $K$ and we can choose some $\beta\in \End(E)\cap K$ satisfying $\widehat{\alpha} \circ\beta = \beta\circ\alpha$.
\end{proof}

\begin{Corollary}\label{cor:everyOO}
    Every supersingular elliptic curve $E/\Fpbar$ has at least one pair of non-scalar endomorphisms $\alpha,\beta\in\End(E)$ such that $\alpha$ determines an $\OO$-orientation $\iota:\OO\hookrightarrow\End(E)$ and $\beta$ is an $\OO$-twisting endomorphism. 
\end{Corollary}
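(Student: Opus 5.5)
The plan is to reduce the corollary to Theorem~\ref{thm:everyOO}: first produce an orientation on $E$, then invoke the theorem to get the twisting endomorphism, and finally check that both endomorphisms are non-scalar.

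\textbf{Step 1: choose $\alpha$.} I take $\alpha$ to be any non-scalar element of $\End(E)$. This is possible because $\End(E)$ is a maximal order in the quaternion algebra $B_{p,\infty}$, hence a rank-$4$ lattice, while $\ZZ$ has rank $1$. The reduced characteristic polynomial of $\alpha$ is $x^2 - \tr(\alpha)x + N(\alpha)$, and $B_{p,\infty}$ is a division algebra ramified at $\infty$, so this polynomial has negative discriminant. Hence $\QQ(\alpha)$ is an imaginary quadratic field $K'$. The primitive order containing $\alpha$ is $\OO := \iota^{-1}(\QQ(\alpha)\cap\End(E))$, which is an imaginary quadratic order, say $\OO=\ZZ[\omega]$. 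The inclusion $\QQ(\alpha)\hookrightarrow \End(E)\otimes\QQ$ then defines a primitive $\OO$-orientation $\iota$. It is determined by $\alpha$, since $\alpha$ generates $K'$ over $\QQ$.

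\textbf{Step 2: apply Theorem~\ref{thm:everyOO}.} The theorem gives $\beta\in\End(E)$ with $\widehat\iota(\omega)\circ\beta=\beta\circ\iota(\omega)$. Its proof in fact gives a nonzero $\beta$ in the two-dimensional space $\{x : ux+xu=0\}$, where $u=\iota(\omega)-\tr(\iota(\omega))/2$ is a nonzero pure quaternion. I will take $\beta$ nonzero in that intersection; the intersection with the lattice $\End(E)$ is nonzero because some integer multiple of any rational vector in that space lies in $\End(E)$.

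\textbf{Step 3: $\beta$ is non-scalar.} This is the only step needing care. Suppose $\beta=c$ with $c\in\QQ^\times$. Then $u\beta+\beta u=2cu\neq 0$, a contradiction. So $\beta$ is not a nonzero scalar.

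If the intended reading of the corollary also requires $\alpha$ to orient by the very order $\OO$ whose twisting endomorphism is $\beta$, this holds by construction. Note that $\beta$ anticommutes with $u$, so it does not lie in $\QQ(\alpha)$ (elements there commute with $u$, and would have to satisfy $2\beta u=0$). Thus $\alpha,\beta$ are genuinely independent. I expect no real obstacle beyond making the primitivity of the orientation and the non-scalarity of $\beta$ explicit.
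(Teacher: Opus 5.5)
Your proof is correct and follows essentially the same route as the paper: orient $E$ via $\alpha$, then take a nonzero lattice point in the two-dimensional anticommutant of the trace-zero part of $\iota(\omega)$ (the paper picks $\alpha$ of trace zero in the Gross lattice, so the twisting condition reads directly $\alpha\beta=-\beta\alpha$). Your saturation $\OO=\QQ(\alpha)\cap\End(E)$ and your explicit non-scalarity check for $\beta$ are slightly more careful than the paper. The paper takes $\OO=\ZZ[\alpha]$ and infers primitivity from trace zero plus cyclic kernel, which can fail (e.g.\ $\alpha=1+2\zeta_3$ of degree $3$ when $j(E)=0$). The conclusion still holds there, because the anticommutant depends only on $\QQ(\alpha)$.
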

\begin{proof}
    The Gross lattice of $\End(E)$ is defined to be the set of endomorphisms $\{2\alpha - Tr(\alpha):\alpha\in\End(E)\}$ is a three-dimensional lattice over $\ZZ$. Let $\alpha$ be a cyclic endomorphism of $E$ in the Gross lattice, where in particular the trace of $\alpha$ is $0$, so $\overline{\alpha} = -\alpha$. Since $\ker\alpha$ is cyclic and $\alpha$ is trace 0, $\alpha$ does not factor through multiplication-by-$m$ for any $m\in\ZZ_{>1}$ and $\alpha$ determines an $\OO$-orientation by the imaginary quadratic order $\OO\coloneqq \ZZ[\alpha]$.
    
    Since $\ZZ[\alpha]$ generates a two-dimensional lattice inside of the four-dimensional $\QQ$-vector space $\End(E)\otimes_\ZZ\QQ$, the kernel of the map $(x\mapsto \alpha \circ x + x\circ \alpha)$ is a two-dimensional $\QQ$-vector subspace of $\End(E)\otimes_\ZZ\QQ$. Since $\End(E)$ is a full-rank lattice in $\End(E)\otimes_\ZZ \QQ$, $\End(E)\cap \ker(x\mapsto \alpha\circ x + x\circ\alpha)$ contains some nonzero element, say $\beta$. In particular, $-\alpha\circ \beta = \beta\circ\alpha$.
\end{proof}

Given an $\OO$-oriented supersingular elliptic curve $(E,\iota)$, we can characterize $\OO$-twisting endomorphisms as follows:
\begin{Lemma}\label{lem:traceofOtwisting}
    Suppose $(E,\iota)$ is an $\OO$-oriented supersingular elliptic curve, and that $\beta\in\End(E)$ is an $\OO$-twisting endomorphism. Then, the trace of $\beta$ is $0$. 
\end{Lemma}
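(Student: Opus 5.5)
Write $\alpha := \iota(\omega)$, so that $\widehat{\iota}(\omega) = \iota(\overline{\omega}) = \widehat{\alpha} = \tr(\alpha) - \alpha$. The plan is to argue inside the quaternion algebra $B := \End(E)\otimes_\ZZ\QQ$, which is ramified at $p$ and $\infty$, using the same trace-zero element $u := \alpha - \tfrac{\tr(\alpha)}{2}$ as in the proof of Theorem~\ref{thm:everyOO}. Since $\OO$ is an imaginary quadratic order, $\omega\notin\QQ$. Hence $u$ is a nonzero pure quaternion, and $u^2 = -N(u)$ is a negative rational number.

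\textbf{Step 1.} Rewrite the twisting condition $\widehat{\alpha}\circ\beta = \beta\circ\alpha$ by substituting $\alpha = \tfrac{\tr\alpha}{2} + u$ and $\widehat{\alpha} = \tfrac{\tr\alpha}{2} - u$. The scalar parts cancel, and the condition becomes the anticommutation relation $u\beta = -\beta u$, exactly as in the proof of Theorem~\ref{thm:everyOO}.

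\textbf{Step 2.} Show that anticommuting with a nonzero invertible $u$ forces $\beta$ to have trace zero. Since $u$ is invertible, $u\beta u^{-1} = -\beta$. The reduced trace is invariant under conjugation, so $\tr(\beta) = \tr(u\beta u^{-1}) = \tr(-\beta) = -\tr(\beta)$. Hence $2\tr(\beta) = 0$, and because we are in characteristic zero, $\tr(\beta) = 0$.

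The reduced trace in $B$ is the same as the trace $\beta + \widehat{\beta}$ of the endomorphism. So $\tr(\beta) = 0$ as an integer, which is the claim of the lemma.

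If one wants to avoid invoking conjugation invariance, there is a more hands-on alternative. Write $\beta = b_0 + \beta_0$ with $\beta_0$ a pure quaternion. Pure quaternions $x,y$ satisfy $xy + yx = -2\langle x,y\rangle \in \QQ$. Then $u\beta + \beta u = 2b_0u + (u\beta_0 + \beta_0 u)$, where the first term is pure and the second is scalar. Setting this equal to $0$ forces $2b_0u = 0$, so $b_0 = 0$ because $u\neq 0$.

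\textbf{Main obstacle.} The only point requiring care is the first move: justifying that $u \neq 0$, and that the trace of an endomorphism agrees with the reduced trace in $B$. Both are standard. The first holds because $\iota$ is an embedding of an imaginary quadratic order, so $\iota(\omega)$ is not a scalar. The second is the usual identification under the Deuring correspondence. The rest is a one-line computation.
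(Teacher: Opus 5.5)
Your proof is correct, but it takes a different route from the paper's.

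The paper never centers $\alpha$ and never leaves $\End(E)$. It composes the twisting relation $\widehat{\alpha}\circ\beta=\beta\circ\alpha$ on the left with $\alpha$ to get $[\deg\alpha]\circ\beta=\alpha\circ\beta\circ\alpha$. It does the same with the dual relation $\widehat{\beta}\circ\alpha=\widehat{\alpha}\circ\widehat{\beta}$ to get $\alpha\circ\widehat{\beta}\circ\alpha=[\deg\alpha]\circ\widehat{\beta}$. Adding the two gives $\tr(\beta)\,\bigl([\deg\alpha]-\alpha\circ\alpha\bigr)=0$. Since $\alpha$ is not an integer, $\alpha\circ\alpha\neq\alpha\circ\widehat{\alpha}=[\deg\alpha]$, so $\tr(\beta)=0$. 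That argument uses only $\alpha\circ\widehat{\alpha}=[\deg\alpha]$, duality, and the absence of zero divisors in $\End(E)$. It needs no division by $2$, no inverse of $u$, and no conjugation-invariance of the reduced trace.

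Your argument passes to $B=\End(E)\otimes_\ZZ\QQ$ and reduces the condition to $u\beta=-\beta u$, which is the same reduction used in the proof of Theorem~\ref{thm:everyOO}. You then read off $\tr(\beta)=\tr(u\beta u^{-1})=-\tr(\beta)$. This is shorter and isolates the real mechanism: anything anticommuting with a unit has trace zero.

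Your hands-on variant proves more than the lemma. Writing $\beta=b_0+\beta_0$, you get $u\beta+\beta u=2b_0u+\tr(u\beta_0)$, which vanishes exactly when $b_0=0$ and $\tr(u\beta_0)=0$. So the $\OO$-twisting endomorphisms are precisely the elements of $\End(E)$ that are pure quaternions orthogonal to $u$ for the norm form. Since $N(u)\neq0$, the functional $x\mapsto\tr(ux)$ is nonzero on the three-dimensional space of pure quaternions, so this space is two-dimensional. That justifies the dimension claim for the kernel of $x\mapsto ux+xu$ in Theorem~\ref{thm:everyOO}, which the paper asserts without argument.
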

\begin{proof}
    Suppose $\iota(\omega) = \alpha\in\End(E)$, so that $\widehat{\iota}(\omega) = \widehat{\alpha}$. The following series of computations follows from the fact that $\beta$ is $\OO$-twisting:
    \begin{equation}\label{eq:lemma01}
    \begin{split}
        \widehat{\alpha}\circ\beta &= \beta \circ\alpha\\
        \alpha\circ \widehat{\alpha}\circ\beta &= \alpha\circ\beta \circ\alpha\\
        [\deg\alpha]\circ\beta &= \alpha\circ\beta\circ\alpha
    \end{split}
    \end{equation}
    Taking the dual of both sides of the definition of $\OO$-twisting endomorphism, we likewise obtain:
    \begin{equation}\label{eq:lemma02}
    \begin{split}
        \widehat{\beta}\circ\alpha &= \widehat{\alpha}\circ\widehat{\beta}\\
        \alpha\circ\widehat{\beta}\circ\alpha &= [\deg\alpha]\circ\widehat\beta
    \end{split}
    \end{equation}
    Summing equations \eqref{eq:lemma01} and \eqref{eq:lemma02}, 
    \begin{equation*}
    \begin{split}
        [\deg\alpha]\circ(\beta + \widehat\beta) &= \alpha\circ\beta \circ \alpha + \alpha\circ\widehat\beta\circ\alpha \\
        [\deg\alpha]\circ tr(\beta) &= \alpha\circ\alpha\circ tr(\beta),
    \end{split}
    \end{equation*}
    which implies $\tr(\beta) = 0$ since $\alpha\circ\alpha\neq [\deg\alpha]$.
\end{proof}

The relationship between the endomorphisms $\iota(\OO)\subseteq \End(E)$ and the $\OO$-twisting endomorphisms of $E$ is clear from the definitions, but we record these nice properties in the following proposition:
\begin{Proposition}
    Let $(E,\iota)$ be an $\OO$-oriented supersingular elliptic curve. Let $\alpha\in\iota(\OO)\subseteq\End(E)$ and let $\beta_1,\beta_2$ be $\OO$-twisting endomorphisms of $E$. Let $a,b\in\ZZ$.
    \begin{enumerate}[(i)]
        \item $[a]\beta_1 + [b]\beta_2$ is an $\OO$-twisting endomorphism of $E$.

        \item $\beta_i\circ\alpha$ and $\alpha\circ\beta_i$ are $\OO$-twisting endomorphisms of $E$, for any $i\in\{1,2\}$.

    \end{enumerate}
\end{Proposition}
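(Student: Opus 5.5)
Both parts follow directly from Definition~\ref{def:otwistingendo}, which says $\beta$ is $\OO$-twisting exactly when $\widehat{\iota}(\omega)\circ\beta = \beta\circ\iota(\omega)$. This condition is linear in $\beta$. I will also use that the endomorphisms in $\iota(\OO)$ commute with one another and with their images under $\widehat\iota$.

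Write $\gamma \coloneqq \iota(\omega)$. Then $\widehat{\iota}(\omega) = \iota(\overline{\omega})$. Since $\overline{\omega} = \tr(\omega) - \omega$, we get $\widehat\iota(\omega) = [t] - \gamma$ with $t = \tr(\omega)\in\ZZ$. For $\OO\subseteq\End(E)$ (as in the proof of Theorem~\ref{thm:everyOO}) this is $\widehat\gamma$. The twisting condition is therefore $\widehat{\gamma}\circ\beta = \beta\circ\gamma$.

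For (i), I would compute $\widehat\gamma\circ([a]\beta_1+[b]\beta_2)$. Composition distributes over addition in $\End(E)$, and integer multiplications $[a],[b]$ commute with every endomorphism. So this equals $[a]\,\widehat\gamma\circ\beta_1 + [b]\,\widehat\gamma\circ\beta_2$. Applying the twisting property of each $\beta_i$ turns this into $[a]\beta_1\circ\gamma+[b]\beta_2\circ\gamma = ([a]\beta_1+[b]\beta_2)\circ\gamma$, which is the required identity.

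For (ii), write $\alpha = \iota(x+y\omega) = [x]+[y]\gamma$ with $x,y\in\ZZ$, since $\OO=\ZZ[\omega]$. Then $\alpha$ commutes with $\gamma$ and with $\widehat\gamma = [t]-\gamma$, because $\iota(\OO)$ is commutative. For $\beta_i\circ\alpha$:
\[\widehat\gamma\circ\beta_i\circ\alpha = \beta_i\circ\gamma\circ\alpha = \beta_i\circ\alpha\circ\gamma.\]
For $\alpha\circ\beta_i$:
\[\widehat\gamma\circ\alpha\circ\beta_i = \alpha\circ\widehat\gamma\circ\beta_i = \alpha\circ\beta_i\circ\gamma.\]
Both products land in $\End(E)$ because $\End(E)$ is a ring.

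The only point needing care is the identification $\widehat\iota(\omega) = [\tr\omega]-\iota(\omega)$ together with the commutativity of $\iota(\OO)$ with both $\gamma$ and $\widehat\gamma$. This holds because $\iota$ is a ring embedding of the commutative ring $\OO$, and $\widehat\iota$ has the same image $\iota(\OO)$. Once that is noted, the rest is formal manipulation with no real obstacle.
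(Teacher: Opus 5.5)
Your proof is correct and matches the paper's approach: the paper gives no written argument and calls these properties clear from the definitions, which is exactly the direct check you carry out. For (i) you use linearity of the twisting condition, and for (ii) you use that $\alpha$ commutes with both $\iota(\omega)$ and $\widehat{\iota}(\omega)=\iota(\overline{\omega})$ inside the commutative ring $\iota(\OO)$.
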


    Any non-scalar endomorphism $\beta\in\End(E)$ determines an imaginary quadratic order $\ZZ[\beta]$ which embeds into $\End(E)$. If $\beta$ is an $\OO$-twisting endomorphism for the $\OO$-oriented supersingular elliptic curve $(E,\iota)$ with $\OO = \ZZ[\omega]$, one may wonder when $\iota(\omega)$ is an $\ZZ[\beta]$-twisting endomorphism. From similar computations to Lemma~\ref{lem:traceofOtwisting}, we see that this is the case if and only if the trace of $\alpha$ is 0. In this case, the composition of two $\OO$-twisting endomorphisms is an endomorphism in $\iota(\OO)$:
    \[\widehat\iota(\beta)\circ(\alpha_1\circ\alpha_2) = \alpha_1\circ\iota(\beta)\circ\alpha_2 = \alpha_1\circ\alpha_2\circ\widehat\iota(\beta).\]
    This gives $\End(E)$ a $\ZZ/2\ZZ$-graded structure.

\section{Detection of twisting endomorphisms}\label{Determination}

\textbf{Standing assumptions for Sections~\ref{Determination} and \ref{search}:} $p>3$ a prime with $p\equiv3\pmod{4}$; $\ell>2$ a prime satisfying $\left(\frac{-p}{\ell}\right) = 1$.

In this section we give a characterisation of (Frobenius) twisting endomorphisms in terms of isogenies (Theorem \ref{main}). 
We have simple models for the $\Fp$-isomorphism classes supersingular elliptic curves.

\subsection{The \texorpdfstring{$j=1728$}{j=1728} scenario}\label{E12}
Let $i \in\mathbb{F}_{p^2}\setminus\Fp$ such that $i^2=-1$. In this section, we show that twisting endomorphisms for $E$ with $j(E)=1728$ do not correspond to isogenies from $E$ to its $\Fp$-twist, but to $\Fp$-rational isogenies post-composed with an automorphism of $E$. 
Take the following representatives of the two $\Fp$-isomorphism classes of supersingular elliptic curves over $\Fp$ with $j$-invariant $1728$:

\begin{equation}\label{E1E2}
\begin{array}{l}
 E_1:y^2=x^3-4x\\
 E_2:y^2=x^3+x
\end{array} .
\end{equation}
The isomorphism
$$\eta:E_1\to E_2$$
$$\eta(x,y)=(-2ix,(-2i-2)y)$$ 
is not defined over $\Fp$, but $\eta$ is defined over $\FF_{p^2}$. 
The isomorphism $\eta$ is not a twisting endomorphism, which can be computed directly as $i^p = -i$ in $\FF_{p^2}$.

\begin{Example}[Twisting endomorphisms for $j = 1728$]\label{ex:1728twendos}
    Let $E/\Fpbar\coloneqq E_1:y^2 =x^3-4x$ as above. The automorphism group of $E$ is generated by the order-4 automorphism $[i]:(x,y)\mapsto(-x,iy)$. 
    The endomorphism ring of $E$ is a maximal order in the quaternion algebra $\End(E)\otimes_\ZZ \QQ$:
    \[\End(E) = \ZZ\langle [1],[i],\frac{[1] + \pfrob}{2}, \frac{[i] + [i]\circ\pfrob}{2}\rangle.\]
    Twisting endomorphisms of $E$ are elements of $\End(E)$ which anti-commute with $\pfrob$. In particular, 
    \[-(a[i] + b\frac{[i] + [i]\circ\pfrob}{2})\circ\pfrob = \pfrob\circ(a[i] + b\frac{[i] + [i]\circ\pfrob}{2}),\]
    where $a$ and $b$ denote integer scalar multiplication maps. 
    Thus, $E$ has a twisting endomorphism of degree-$\ell$ for every $\ell$ which is represented by the norm-form
    \[N(a[i] + b\frac{[i] + [i]\circ\pfrob}{2}) = \ell\]
    \[(a + \frac{b}{2})^2 +b^2\frac{p + 1}{4} = \ell.\]
    Notice that $(a[i] + b\frac{[i] + [i]\circ\pfrob}{2}) = [i] \circ (a + b\frac{[1] + \pfrob}{2})$ and $N(a[i] + b\frac{[i] + [i]\circ\pfrob}{2}) = N(a + b\frac{[1] + \pfrob}{2})$. 
    This last norm is the norm form of the imaginary quadratic order $\ZZ[\frac{1 + \sqrt{-p}}{2}]$. 
    In summary, a supersingular elliptic curve $E$ with $j(E)=1728$ admits a degree-$\ell$ twisting endomorphism for every $\ell$ such that there exists a principal ideal of norm $\ell$ in $\ZZ[\frac{1 + \sqrt{-p}}{2}]$, by post-composing such an endomorphism with the automorphism $[i]$. 
\end{Example}
Example~\ref{ex:1728twendos} serves as a warning: the property of being a twisting endomorphism is not preserved by post-composition with a non-scalar automorphism. Endomorphisms $\eta,[i]\circ\eta\in\End(E)$ with $j(E) = 1728$ have the same kernel, but at most one of these endomorphisms is a twisting endomorphism. 

\subsection{Full supersingular \texorpdfstring{$\ell$}{l}-isogeny graph \texorpdfstring{$ \mathcal{G}_{\ell}(\Fpbar) $}{} }

In this section we describe the relationship between twisting endomorphisms and edges in the $\mathbb{F}_p$-rational supersingular isogeny graph. We recall first how the full supersingular graph is defined.

The vertices of the graph $ \mathcal{G}_{\ell}(\overline{\mathbb{F}}_q) $ are the $\Fpbar$-isomorphism classes of supersingular elliptic curves, labeled by $j$-invariants: 
$$ V_{\overline{\mathbb{F}}_q}=\{ j_E\in \mathbb{F}_{p^2} \mid  E \text{ supersingular elliptic curve in characteristic } p \}$$ 
 The edges of $ \mathcal{G}_{\ell}(\overline{\mathbb{F}}_q) $ are the set 
 $$A_{\overline{\mathbb{F}}_q}=\{ \ell\text{-isogenies } \varphi  : E_1\rightarrow E_2 \mid j_{E_1},j_{E_2}\in V_{\overline{\mathbb{F}}_q}\}$$
of isogenies of degree $\ell$ (defined over $\overline{\mathbb{F}}_{q}$) up to post-composition with an automorphism, between curves in the classes in $V_{\,\overline{\mathbb{F}}_q}$. 

Let $ \Phi_\ell( x , y ) $ be the classical modular polynomial of level $ \ell $ (see \cite[III.8]{_B_S_S_}). Two vertices in $ \mathcal{G}_{\ell}(\overline{\mathbb{F}}_q) $ with $j$-invariants $j_1,j_2$ are adjacent if and only if 
\begin{equation}\label{modpoint}
\Phi_\ell( j_1 , j_2 ) \equiv 0 \pmod p.
\end{equation} 
Lemma~\ref{lem:Fpisogenies} describes precisely when a solution to such an equation exists over $\Fp$.

Let  $ G_1 $, $ \ldots $, $ G_{\ell + 1} $ denote the $ \ell + 1 $ subgroups of order $ \ell $ in $ E( \overline{\mathbb F}_p ) $ and let $E/G_i$  the $ \ell + 1 $ elliptic curves adjacent to $E$ with the $\ell$-isogenies $\mathcal I_{G_i} $. 
Then, the $j$-invariants $$ j( E / G_1 ) ,  \ldots ,  j( E / G_{\ell + 1} ) $$ are the roots of the degree $\ell+1$ polynomial $ \Phi_\ell(  x, j( E )) \equiv 0 \pmod p$.
 
If $ \Phi_\ell( j , j )\equiv 0 \pmod{p}$, then $j$ represents a $\overline{\mathbb{F}}_p$-isomorphism class with an elliptic curve $E$ in characteristic $p$ and an $\ell$-isogeny to another elliptic curve in the same $\overline{\mathbb{F}}_p$-isomorphism class. In particular, if 
$E$ has a twisting endomorphism, then $ \Phi_\ell( x , x ) \pmod{p}$ has a zero (not in $\mathbb{F}_p$), and this appears as a loop in  $ \mathcal{G}_{\ell}(\overline{\mathbb{F}}_q) $. However, the converse statement does not hold.

Proposition~\ref{prop:factpattern} below tells when a zero of $ \Phi_\ell( x , x ) \pmod{p}$, for $\ell>2$, is the $j$-invariant of a \emph{supersingular} elliptic curve. 
For the proof of Proposition~\ref{prop:factpattern} we need some control on the number of edges in $A_{\overline{\mathbb{F}}_q}$ not defined over $\mathbb{F}_p$.
Let $E, C$ supersingular elliptic curves over $ \mathbb F_p$, let 
$$\mathcal{Q}(E)= \{ \mathcal I_{\langle P \rangle} : E \longrightarrow E / \langle P \rangle  \mid \pfrob(P) \notin \langle P \rangle\}$$ 
the set of isogenies from $E$ not defined over $\mathbb{F}_p$ and let 
$$\mathcal{T}(E,C)=\{\mathcal{I} : E \rightarrow E' \mid  \mathcal{I} \in   \mathcal{Q}(E) ,\,  E'  \in \text{Isom}_{\,\mathbb{F}_p}(C)\}.$$

The following Lemma does not require $p\equiv 3\pmod{4}$.

\begin{Lemma} \label{P-L-1}\label{Teven}
For any pair $E, C$ of supersingular elliptic curves defined over $\mathbb{F}_p$ which are not isomorphic over $\Fp$, the  cardinality of $\mathcal{T}(E,C)$ is even.
\end{Lemma}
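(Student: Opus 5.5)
We pair up the elements of $\mathcal{T}(E,C)$ using Frobenius, and show that the pairing has no fixed points.

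Take $\mathcal{I}=\mathcal{I}_{\langle P\rangle}\in\mathcal{T}(E,C)$, so $E/\langle P\rangle$ is $\Fp$-isomorphic to $C$ and $\pfrob(P)\notin\langle P\rangle$. Since $E$ is defined over $\Fp$, $\pfrob$ is an endomorphism of $E$. Set $P'=\pfrob(P)$. Then $\langle P'\rangle$ is again a cyclic subgroup of $E$ of order $n=\deg\mathcal{I}$. It is distinct from $\langle P\rangle$ by assumption, and applying $\pfrob$ once more gives $\pfrob(P')=\pfrob^2(P)=[-p]P$, because $\pfrob^2=-p$ in $\End(E)$ (we have $\ZZ[\pfrob]\cong\ZZ[\sqrt{-p}]$). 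Hence $\pfrob(P')\in\langle P\rangle$. Since $\langle P\rangle\neq\langle P'\rangle$ and $[p]$ is an automorphism of the prime-to-$p$ subgroup, $\pfrob(P')\notin\langle P'\rangle$. So $\mathcal{I}_{\langle P'\rangle}\in\mathcal{Q}(E)$.

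The map $\langle P\rangle\mapsto\langle\pfrob(P)\rangle$ is an involution on subgroups, because $\pfrob^2=[-p]$ preserves every subgroup of order prime to $p$. It has no fixed points on $\mathcal{Q}(E)$ by definition of $\mathcal{Q}(E)$.

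It remains to check that the partner lands in $\mathcal{T}(E,C)$, i.e. that $E/\langle P'\rangle$ is $\Fp$-isomorphic to $C$. By Vélu's formulas, which are rational in the coordinates of the kernel, applying the $p$-power map to the coefficients gives $E/\langle\pfrob(P)\rangle=(E/\langle P\rangle)^{(p)}$, since $E^{(p)}=E$. The $p$-power Frobenius twist of an $\Fp$-isomorphism class of curves over $\Fp$ is itself: if $E/\langle P\rangle\cong_{\Fp}C$, then applying Frobenius to the curve equations sends $\rho_u$ to $\rho_{u^p}$, still defined over $\Fp$. This step is the delicate one: $E/\langle P\rangle$ need not be defined over $\Fp$ as a Weierstrass model, only its class in $\text{Isom}_{\Fp}(C)$ is. I would handle it by interpreting membership in $\mathcal{T}$ as "$\mathcal{I}$ composed with some isomorphism lands on a curve in $\text{Isom}_{\Fp}(C)$". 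Then I would show that $\sigma(\mathcal{I})$, obtained by applying $p$-power Frobenius to all coefficients of $\mathcal{I}$ and of the isomorphism, has kernel $\pfrob(\ker\mathcal{I})$ and target $\sigma(C')=C'$ for $C'\in\text{Isom}_{\Fp}(C)$.

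The hypothesis that $E,C$ are not $\Fp$-isomorphic would be used to ensure the count is not disturbed by the distinction between $\Fp$-twists. With the edges defined up to post-composition by automorphisms, each kernel yields a well-defined edge, and the twist class of the target is Frobenius-stable.

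Once these checks are done, $\mathcal{T}(E,C)$ is partitioned into pairs $\{\mathcal{I}_{\langle P\rangle},\mathcal{I}_{\langle\pfrob(P)\rangle}\}$ of distinct elements, so its cardinality is even.
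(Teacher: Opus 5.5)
Your proof is correct and follows essentially the same route as the paper. Both pair $\langle P\rangle$ with $\langle\pfrob(P)\rangle$, use $\pfrob^2=[-p]$ to see that this pairing is a fixed-point-free involution on $\mathcal{Q}(E)$, and use $E/\langle\pfrob(P)\rangle=(E/\langle P\rangle)^{(p)}$ together with $j_C\in\Fp$ to see that the partner stays in $\mathcal{T}(E,C)$. Your treatment of the target is more careful than the paper's, which only checks $j_{E_2}=j_{E_1}$: you read membership in $\mathcal{T}(E,C)$ up to an isomorphism, since the codomain of a non-$\Fp$-rational isogeny need not be defined over $\Fp$. As in the paper, the hypothesis that $E$ and $C$ are not $\Fp$-isomorphic is never used, so you can drop your remark about where it ``would be used''.
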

\begin{proof}
In slightly different terms, this is \cite[Lemma 3.14]{_A_C-N_L_L_N_S_S_}. 
We have $\pfrob \in \End_{\,\mathbb{F}_p} (E) $ and $ E^{(p)} = E $ because $ E $ is defined over $\mathbb F_p $. 
Let $ P_1 \in E( \overline{\mathbb F}_p ) $ have order $ \ell $ and suppose  $E_1=  E / \langle P_1 \rangle  \in \text{Isom}_{\Fp}(C)$. 
Assume the isogeny $ \mathcal I_{\langle P_1 \rangle} : E \to E_1$ is not defined over $\mathbb{F}_p$, so that  $\pfrob(P_1)\notin \langle P_1 \rangle$. 
We thus have $\mathcal I_{\langle P_1 \rangle} \in \mathcal{T}(E,C)$. We have to find another isogeny in $ \mathcal{T}(E,C)$ different from $\mathcal I_{\langle P_1 \rangle}$.

Let  $ P_2 = \pfrob( P_1 ) $. The order of $P_2$ is $\ell$ because $\pfrob$ commutes with the multiplication by $\ell$ map.  The condition that $\mathcal I_{\langle P_1 \rangle} $ is not defined over $\mathbb{F}_p$ is then $P_2\notin \langle P_1 \rangle$. Let $ E_2 = E / \langle P_2 \rangle $. 
The curves $ E_1 , E_2 $ are not $\mathbb{F}_p$-isomorphic, but we have $E_2 =  E_1^p $ by diagram \eqref{di1}. 
At the same time $E_1\in \text{Isom}_{\Fp}(C)$, and $C$ is defined over $\mathbb{F}_p$ by assumption.
Therefore $ j_{E_1}= j_C \in \mathbb F_p $,
and then $$ j_{E_2 } = j_{E_1^{(p)}} = j_{E_1}^p = j_{E_1} .$$ 
Therefore $ E_1 $ and $ E_2 $ are isomorphic and $E_2 \in \text{Isom}_{\Fp}(C)$ follows. 
Moreover, the Frobenius endomorphism satisfies $ \phi_{p,E_2}\circ\phi_{p,E_1} = - [ p ]_{E_1} $ because $ E_1 $ is supersingular over $\Fp$. 
Hence  $ \phi_{p,E}( P_2 ) = \pfrob^2( P_1 ) \in \langle P_1 \rangle \nsubseteq \langle P_2 \rangle$. 
Therefore 
$\mathcal I_{\langle P_2 \rangle} : E \to E_2$  is a non-$\Fp$-rational $\ell$-isogeny different from $\mathcal I_{\langle P_1 \rangle}$ and belongs to $\mathcal{T}(E,C)$ too.
Notice $ E / \langle \pfrob( P_2 ) \rangle = E_1 $ so the argument produces new isogenies only in pairs.
\end{proof}

\begin{Proposition}\label{prop:factpattern}
Let $p>3$ be a prime with $p\equiv3\pmod{4}$; $\ell>2$ a prime satisfying $\left(\frac{-p}{\ell}\right) = 1$. Let $E$ be an elliptic curve over $\mathbb{F}_p$ such that $j_E\ne 0,1728$ is a zero of $\Phi_\ell( X, X)\mod p$ of odd multiplicity. Then  $E$ is supersingular if and only if $ \Phi_\ell( X, j_E)$ neither splits completely into distinct linear factors over $\mathbb{F}_p[X]$ nor has an irreducible factor of degree $\ell$ over $\mathbb{F}_p[X]$.
\end{Proposition}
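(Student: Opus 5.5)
The plan is to treat the two directions separately, reading everything off the action of $\pfrob$ on the $\ell+1$ cyclic subgroups $G_1,\dots,G_{\ell+1}\subset E[\ell]$. Recall that the roots of $\Phi_\ell(X,j_E)$, counted with multiplicity, are the $j(E/G_i)$. Since $\pfrob$ maps $G_i$ to a subgroup $G_k$ with $j(E/G_k)=j(E/G_i)^p$, the degrees of the irreducible factors of $\Phi_\ell(X,j_E)$ over $\Fp$ are governed by the orbit structure of $\pfrob$ acting on $\mathbb{P}(E[\ell])\cong\mathbb{P}^1(\FF_\ell)$.

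\emph{Supersingular $\Rightarrow$ the pattern.} Suppose $E$ is supersingular. Then $\pfrob^2=-[p]$ acts on $E[\ell]$ as a scalar. Since $\left(\frac{-p}{\ell}\right)=1$, the eigenvalues of $\pfrob$ on $E[\ell]$ are the two distinct square roots $\pm\sqrt{-p}\in\FF_\ell$. Hence on $\mathbb{P}^1(\FF_\ell)$ the map $\pfrob$ has exactly two fixed points, which are the two $\Fp$-rational isogenies of Lemma~\ref{lem:Fpisogenies}. The remaining $\ell-1$ subgroups fall into orbits $\{\langle P\rangle,\langle\pfrob(P)\rangle\}$ of size $2$.

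It follows that every irreducible factor of $\Phi_\ell(X,j_E)$ over $\Fp$ has degree at most $2$, so no factor of degree $\ell>2$ occurs. Now suppose $\Phi_\ell(X,j_E)$ split into linear factors over $\Fp$. Take a size-$2$ orbit; one exists because $\ell-1\geq 2$. Then $j(E/\langle P\rangle)\in\Fp$, so $j(E/\langle \pfrob P\rangle)=j(E/\langle P\rangle)^p=j(E/\langle P\rangle)$. Two distinct kernels would then give the same root, which is a repeated linear factor; this is exactly the pairing of Lemma~\ref{Teven}. So $\Phi_\ell(X,j_E)$ cannot split into \emph{distinct} linear factors. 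This direction does not even use the odd-multiplicity hypothesis.

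\emph{Ordinary $\Rightarrow$ split distinct or degree-$\ell$ factor.} Suppose $E$ is ordinary. Then $\End(E)$ is commutative and contains $\pfrob$, so every endomorphism commutes with $\pfrob$ and is defined over $\Fp$. Since $j_E\neq 0,1728$, we have $\Aut(E)=\{\pm1\}$. The multiplicity of $j_E$ as a root of $\Phi_\ell(X,X)$ should equal the number of cyclic degree-$\ell$ endomorphisms of $E$ up to sign, i.e.\ of elements of norm $\ell$ in $\End(E)$ up to units. I would justify this by Deuring lifting to characteristic $0$ together with the classical description of the multiplicities of the roots of $\Phi_\ell(X,X)$ (Kronecker–Hurwitz). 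If $\ell$ splits in $\End(E)$ as $\alpha\overline\alpha$ with $\alpha\neq\pm\overline\alpha$, the loops come in conjugate pairs, so the multiplicity is even. Odd multiplicity therefore forces the existence of $\alpha\in\End(E)$ of norm $\ell$ with $(\alpha)=(\overline\alpha)$, i.e.\ $\ell$ ramified in $\End(E)\otimes\QQ=\QQ(\pfrob)$. In that case the characteristic polynomial of $\pfrob$ modulo $\ell$ has a double root.

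So $\pfrob$ acts on $E[\ell]$ either as a scalar or as a nontrivial unipotent times a scalar. The scalar case gives $\ell+1$ fixed kernels, so $\Phi_\ell(X,j_E)$ splits completely; the roots are distinct because the endomorphism ring is locally maximal at $\ell$ in the relevant level, or else by a separate multiplicity check. The non-scalar case gives one fixed point and one orbit of length $\ell$, i.e.\ an irreducible factor of degree $\ell$. This establishes the contrapositive.

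The main obstacle is the ordinary direction. First, the multiplicity statement for $\Phi_\ell(X,X)$ in characteristic $p$ needs care, including the exclusion of $j=0,1728$. Second, in the scalar case one must justify that the $\ell+1$ rational roots are distinct, which I would attempt via the volcano structure: in that case $E$ sits at a level where the $\ell+1$ neighbours are pairwise non-isomorphic. If the multiplicity formula proves awkward, I would instead argue directly that the odd-multiplicity root at $j_E$ cannot arise from the two rational eigenline kernels of a Frobenius with distinct eigenvalues, since those correspond to $\alpha$ and $\overline\alpha$ and contribute symmetrically.
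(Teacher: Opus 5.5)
Your supersingular direction is correct, and it takes a somewhat cleaner route than the paper. The paper uses the two $\Fp$-rational $\ell$-isogenies: they give two linear factors of a degree-$(\ell+1)$ polynomial, so there is no degree-$\ell$ factor. It then uses the pairing of Lemma~\ref{Teven} to produce repeated roots. You read both conclusions directly off $\pfrob^2=-[p]$: all roots lie in $\FF_{p^2}$, and the $(\ell-1)/2$ Frobenius orbits of size $2$ on kernels force repeated roots once everything is $\Fp$-rational. You also never need the odd-multiplicity hypothesis in this direction.

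The ordinary direction has a genuine gap, at the point you flag yourself. It is not peripheral: distinctness of the roots is exactly what separates the ordinary pattern from the supersingular one, where you have just shown the polynomial can split with repeated roots.

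\begin{itemize}
\item In the scalar case, ``locally maximal at $\ell$'' does not show that the $\ell+1$ values $j(E/G_i)$ are pairwise distinct.
\item The non-scalar case has the same hole, and you do not acknowledge it there. An orbit of length $\ell$ on kernels only tells you that the Galois orbit of $j(E/G)$ has size dividing $\ell$. Size $1$ would give $(X-j_E)(X-j_1)^{\ell}$: split, with repeated roots, and no degree-$\ell$ factor.
\end{itemize}

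The missing input is what the paper supplies, and it has three steps.
\begin{itemize}
\item Since $\alpha^2=-\ell$ with $\alpha\in\OO:=\End(E)$, the order $\OO$ is $\ell$-maximal with $\ell$ ramified. So $\ker\alpha$ is the unique horizontal kernel.
\item The other $\ell$ kernels are therefore descending, to curves with endomorphism ring $\OO'=\ZZ+\ell\OO$. Their $j$-invariants therefore differ from $j_E$.
\item By the class number formula, $h(\OO')=\ell\,h(\OO)/[\OO^{\times}:\OO'^{\times}]=\ell\,h(\OO)$, where the unit index is $1$ precisely because $j_E\neq 0,1728$. Hence the $\ell$ descending neighbours of $E$ are pairwise non-isomorphic. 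Equivalently: each $\OO'$-curve has a unique ascending kernel, so two descending kernels of $E$ with isomorphic targets would differ by an automorphism of $E$, i.e.\ by $\pm1$.
\end{itemize}

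With this in hand, the scalar case gives $\ell+1$ distinct rational roots. The non-scalar case gives $\ell$ distinct roots permuted cyclically by $x\mapsto x^p$, i.e.\ an irreducible factor of degree $\ell$.

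Your parity step producing $\alpha$ with $\alpha^2=-\ell$ is at the same level of rigor as the paper's ``this multiplicity must be one,'' and only parity is needed there.
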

\begin{proof}
Write $\Phi_\ell( X, j_E)=(X-j_E)f(X)$ with $(X-j_E)\nmid f(X)$. Assume $E/\Fp$ is ordinary. The number of $\mathbb{F}_p$-rational horizontal $\ell$-isogenies in the $\ell$-isogeny volcano component containing $E$ is $$\left(\frac{d_K}{\ell}\right) + 1,$$ where $d_K$ is the discriminant of the maximal order of the quadratic field $K\cong \End(E)\otimes_\ZZ\QQ$, and $d_{\pfrob}=t^2-4q=g^2d_{K}$ where $g$ is the conductor of $\mathbb{Z}[\pfrob]$ in $\mathcal{O}_K$ and $t$ is the trace of $\pfrob$.  
Since $E$ is ordinary and $E$ has an $\ell$-isogeny to itself, $E$ lies in the crater. Since $j_E$ has odd multiplicity as a root of $\Phi_\ell(X,X)$, this multiplicity must be one and therefore $\left(\frac{d_K}{\ell}\right) + 1=1$. Then $$d_K\equiv 0 \pmod{\ell},$$ and hence also $d_{\pfrob}\equiv 0 \pmod{\ell}$. 
By \cite[Proposition VII.2]{_B_S_S_}, if this is the case then there are just two possibilities for $f(X)$: either it  is irreducible of degree $\ell$, or else it splits into $\ell$ linear factors. 
We still have to show these are all different. 
But by the class number formula in \cite[Thm. 7.24]{_C_}
for orders $\mathcal{O}'\subseteq \mathcal{O}$, we deduce $h(\mathcal{O}')=\ell h(\mathcal{O})$ because $j_E\ne 0,1728$. This shows there are exactly $\ell$ descending $\mathbb{F}_p$-rational $\ell$-isogenies from $E$, and this means $ \Phi_\ell( X, j_E)$ splits completely over $\mathbb{F}_p$.

Assume now $E/\Fp$ is supersingular. Since $\left(\frac{-p}{\ell}\right)=1$, then $E$ has precisely two $\mathbb{F}_p$-rational $\ell$-isogenies, and one of these is to itself or its twist. Suppose the other is to $E'/\Fp$.
As above, let $$f( X ) = ( X - j_{E'} )^d g( X )$$ with  $j_{E'} \in \mathbb{F}_p$, $d \ge 1$ and  $(X-j_{E'})\nmid g( X )$. Already we see that $\Phi_\ell(X,j_E)$ cannot have an irreducible factor of degree-$\ell$, since it is degree $\ell+1$ and divisible by both $(X-j_E)$ and $(X-j_{E'})$.
At once, if E is supersingular then $f( X )$ is not irreducible in $\mathbb{F}_p[ X ]$. Furthermore, if $d > 1$ then even if $f( X )$ splits into linear factors in $\Fp[ X ]$, clearly not all of them are different.
Else, if $d = 1$ and $g(X)$ splits into linear factors, then by Lemma \ref{P-L-1} above the multiplicity of each of them
is even. Hence in this case too the $\ell$ linear factors of $f(X)$ are not all different. Finally, if $d = 1$ and $g( X )$ has an irreducible factor in  $\mathbb{F}_p[ X ]$ of degree $r>1$, $r<\ell$, then $ \Phi_\ell( X, j_E)$ cannot split completely nor have an irreducible factor of degree $\ell$.
\end{proof}

Proposition~\ref{prop:factpattern} allows us to check if a twisting endomorphism is truly attached to a supersingular $j$-invariant, for $j\not\in\{0,1728\}$. The twisting endomorphisms for the $j$-invariant 1728 are detected differently, as discussed in Section~\ref{E12}, and the $j$-invariant $j=0$ is supersingular for $p\equiv2\pmod{3}$, so it is not necessary to extend such a proposition for $j=0,1728$. 
In cases where the factorisation pattern of the modular polynomial is not readily available, other efficient supersingularity tests may be applied, such as \cite{_Su_}.

\subsection{Twists}
The twists of an elliptic curve $E$ defined over $\mathbb{F}_p$ are the $\mathbb{F}_p$-isomorphism classes of elliptic curves over $\mathbb{F}_p$ that become isomorphic to $E$ over some extension of $\mathbb{F}_p$: 
\begin{align*}
Twist(E/\mathbb{F}_p) = \{ \text{Isom}_{\,\mathbb F_p}( E' )   \mid   j_{E'}= j_E \}.
\end{align*} 
A representative of a non-trivial class in $Twist(E/\mathbb{F}_p)$ is an elliptic curve $E'$ over $\mathbb F_p$ that is isomorphic to $E$ but not $\mathbb F_p$-isomorphic. Complete sets of representatives of $Twist(E/\mathbb{F}_p)$ are given in \cite[Prop. X.5.4]{_S_}.  

Let $u\in \mathbb{F}_p$ and let 
\begin{align}\label{Et}
E^u : y^2 = x^3 + au^2x +bu^3.
\end{align}
If $\left ( \frac{u}{p}\right )= -1$ and  $\omega\in \mathbb{F}_{p^2}$ is such that $\omega^2=u$, then 
 \begin{equation}\label{rhow}
 \begin{array}{rcrc}
 \rho_{\omega}: &E: y^2 = x^3 + ax +b&\longrightarrow&E^u: y^2 = x^3 + au^2x +bu^3\\
 &(x,y)&\longmapsto &(ux,u\omega y)
 \end{array}
 \end{equation}
is an $\mathbb{F}_{p^2}$-isomorphism because  $u=\omega^2, u\omega=\omega^3$ and $\omega^6$ cancels in $E^u$.  The isomorphism 
$ \rho_{\omega}$ and Frobenius do not commute because 
\begin{equation}\label{antic}
\phi_{p,E^u}\circ \rho_{\omega}(x,y) = ((ux)^p, (u\omega y)^p)=(ux^p,-u\omega y^p)=-\rho_{\omega}\circ\pfrob(x,y).
\end{equation} 

In Lemma \ref{EEt} below we show the class in $Twist(E/\mathbb{F}_p)$ the curve $E^u$ belongs to. 

\begin{Lemma}\label{EEt}
Let $u\in \Fp$ such that $\left(\frac{u}{p}\right) = -1$. Let $E,E^u$ be as in \eqref{Et} and let $E_1,E_2$ be as in \eqref{E12}. Then,
\begin{itemize}
    \item If $j(E)\neq 0,1728$, or if $j(E) = 0$ and $p\equiv 2\pmod{3}$, then there are two $\Fp$-isomorphism classes of twists of $E$, given $Twist(E/\Fp) = \{Isom(E^u),Isom(E)\}$.

    \item If $j(E) = 1728$ and $p\equiv 3\pmod{4}$, then there are two $\Fp$-isomorphism classes of twists of $E$, given $Twist(E/\Fp) = \{Isom(E_1),Isom(E_2)\}$.
  
\end{itemize}
\end{Lemma}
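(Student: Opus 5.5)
We prove the two cases separately, using the explicit description of twists of a curve over $\Fp$ in terms of $H^1(\mathrm{Gal}(\Fpbar/\Fp),\Aut(E))$, as in \cite[Prop. X.5.4]{_S_}.

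\textbf{First case.} Suppose $j(E)\neq 0,1728$, or $j(E)=0$ with $p\equiv 2\pmod 3$. We first show that there are exactly two classes.

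When $j(E)\neq 0,1728$ we have $\Aut(E)=\{\pm1\}$. The twists are then parametrised by $\Fp^\times/(\Fp^\times)^2$, which has exactly two elements.

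When $j(E)=0$ and $p\equiv 2\pmod 3$, there is no primitive cube root of unity in $\Fp$. Frobenius therefore acts on $\Aut(E)\cong\mu_6$ by inversion. The twist set $H^1(\hat{\ZZ},\mu_6)$, with Frobenius acting by $\zeta\mapsto\zeta^{-1}$, is the cokernel of $\zeta\mapsto \zeta^{p}/\zeta=\zeta^{-2}$ on $\mu_6$. This gives $\mu_6/\mu_3\cong\ZZ/2$, so again there are exactly two classes. Concretely, the sextic twist $y^2=x^3+bu$ is $\Fp$-isomorphic to either $E$ or $E^u$: cubing is a bijection on $\Fp$, so only the square class of $u$ matters.

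Next we check that $E$ and $E^u$ are twists but are not $\Fp$-isomorphic. The map $\rho_\omega$ of \eqref{rhow} is an $\mathbb{F}_{p^2}$-isomorphism, so $E^u$ is a twist of $E$.

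Now suppose $\rho_v:E\to E^u$ is an $\Fp$-isomorphism, $v\in\Fp$. Then $v^4a=au^2$ and $v^6b=bu^3$.
\begin{itemize}
\item If $ab\neq0$, dividing gives $v^2=u$. This contradicts $\left(\frac{u}{p}\right)=-1$.
\item If $b=0$, then $j=1728$, which is excluded.
\item If $a=0$, then $v^6=u^3$, so $(v^2/u)^3=1$. Since $p\equiv2\pmod3$, this forces $v^2=u$, again a contradiction.
\end{itemize}
Hence $Twist(E/\Fp)=\{Isom(E),Isom(E^u)\}$.

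\textbf{Second case.} Suppose $j(E)=1728$ and $p\equiv 3\pmod 4$. Now $\Aut(E)=\mu_4$, and $i\notin\Fp$, so Frobenius acts on $\mu_4$ by inversion. The same cokernel computation gives $\mu_4/\mu_4^{2}\cong\ZZ/2$, so there are exactly two classes.

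It remains to show that $E_1$ and $E_2$ are not $\Fp$-isomorphic. An $\Fp$-isomorphism $\rho_v:E_2\to E_1$ would need $v^4=-4$. But $-1$ is a non-square in $\Fp$ while $v^4$ is a square, so $v^4=-4$ is impossible. Both curves have $j=1728$, so they are twists of each other, and they represent the two classes.

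\textbf{Expected obstacle.} The main difficulty is the count of exactly two classes in the $j=0$ and $j=1728$ cases. This requires the Galois-cohomology computation, or equivalently a direct argument that every curve $y^2=x^3+b'$, respectively $y^2=x^3+a'x$, is $\Fp$-isomorphic to one of the two listed curves. That direct argument uses the fact that cubing is bijective on $\Fp$ when $p\equiv 2\pmod 3$, and that $\Fp^\times/(\Fp^\times)^4=\Fp^\times/(\Fp^\times)^2$ when $p\equiv 3\pmod4$.
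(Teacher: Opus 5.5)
Your proof is correct. It rests on the same classification of twists, \cite[Prop. X.5.4]{_S_}, that the paper cites, but you evaluate the twist set by a different computation.

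The paper uses the Kummer form of that proposition: $Twist(E/\Fp)\cong\Fp^\times/(\Fp^\times)^n$ for $n=2,4,6$, with the explicit models $y^2=x^3+D^2ax+D^3b$, $y^2=x^3+Dx$ and $y^2=x^3+D$. It then notes that $(\Fp^\times)^4=(\Fp^\times)^2$ for $p\equiv 3\pmod 4$ and $(\Fp^\times)^6=(\Fp^\times)^2$ for $p\equiv 2\pmod 3$. With that, both the count and the choice of representatives reduce to the square class of $D$; for instance $u^3$, resp.\ $-4$, is a non-square.

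You instead compute $H^1(\hat{\ZZ},\Aut(E))$ as the Frobenius coinvariants $\mu_n/(F-1)\mu_n$, using that Frobenius acts on $\mu_n$ by inversion. This gives the number of classes intrinsically. You then check by hand, solving the equations for $\rho_v$ over $\Fp$, that the listed curves are twists lying in different $\Fp$-classes.

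Your version is more self-contained about why the listed curves are representatives. The cost is a case analysis ($ab\neq 0$, $a=0$) that the paper's residue observation handles in one line. Your remark that cubing is bijective on $\Fp$ is exactly the paper's observation.

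Two small points. In the $j=1728$ step, say explicitly that $-4$ is a non-square, since $4$ is a square and $-1$ is not. In the aside on sextic twists, you reuse $u$ for an arbitrary parameter in $\Fp^\times$.
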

\begin{proof}
    See \cite[Prop. X.5.4]{_S_}, noting that when $p\equiv 3\pmod{4}$, the set of quadratic residues modulo $p$ is precisely equal to the set of quartic residues modulo $p$, and when $p\equiv 2\pmod{3}$, the set of quadratic residues modulo $p$ is precisely equal to the set of sextic residues modulo $p$.
\end{proof}

Note that if $p\equiv 3\pmod{4}$, then $\left(\frac{-1}{p}\right) = -1$, so for $j(E)\neq 1728$, one can take $E, E^{-1}$ as representatives of the two $\Fp$-isomorphism classes in $Twist(E/\Fp)$.
\begin{Definition}\label{DEt}
Let $E: y^2 = x^3 + ax +b$.  We define 
$$
\begin{array}{rl}
E^{-1}:&y^2 = x^3 + ax -b, \\
\end{array}
$$
\end{Definition}
\noindent For $j(E)\neq 1728$, we have the following $\mathbb{F}_{p^2}$-isomorphism 
\begin{equation}\label{rt}
 \begin{array}{rcrc}
 \rho_i: & E: y^2 = x^3 + ax +b&\longrightarrow&E^{-1}: y^2 = x^3 + ax -b\\
 &(x,y)&\longmapsto &(-x, i y)
 \end{array}
 \end{equation}
For $j_E=1728$ then $E^{-1}=E$ and the map $\rho_i$ becomes our automorphism $[i]$ in section \ref{E12}. 
 
\begin{Definition}\label{tw}
Let $E:y^2 = x^3 + ax + b$ be a supersingular elliptic curve over $\Fp$.
We define 
$$
E^t=
\begin{cases}
E^{-1} & \text{ if   }  j_E\ne 1728, \\
y^2 = x^3 - ax & \text{ if   }  j_E=1728.
\end{cases}
$$
We call $E^t$ the twist of $E$.
\end{Definition}

If $E$ is supersingular over $\Fp$, then $|E^t(\Fp)| = |E(\Fp)| = p+1$, and by Tate's isogeny theorem there exists an isogeny $\varphi$ defined over $\mathbb{F}_p$ from $E$ to its twist $E^t$ if we allow $\varphi$ to have arbitrary degree.

In Theorem \ref{main} below we use two facts.
The first is the relationship between an isogeny and the $p$-power Frobenius map, in following commutative diagram:
\begin{equation}\label{di1}
\begin{tikzcd}[sep = 1.5cm]
E \arrow[d, "\phi_{p , E}" ']  \arrow[r,"\mathcal I_{\langle P \rangle}"] & E / \langle P \rangle = C \arrow[d, "\phi_{p , C}"] \\
E^{(p)}  \arrow[r, "\mathcal I_{\langle P^{( p )} \rangle}" '] & E^{( p )} / \langle P^{( p )} \rangle = C^{( p )} 
\end{tikzcd}
\end{equation}
The second is a standard decomposition of endomorphisms into isogenies and geometric isomorphisms.
\begin{Lemma}\label{comp}
Any twisting endomorphism can be decomposed as an isogeny defined over $\mathbb{F}_p$ together with an isomorphism.
\end{Lemma}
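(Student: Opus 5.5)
Let $E/\Fp$ be supersingular and let $\alpha\in\End(E)$ be a twisting endomorphism, so $\pfrob\circ\alpha=-\alpha\circ\pfrob$. The plan is to factor $\alpha$ through the twist $E^t$. We compose $\alpha$ with the $\mathbb{F}_{p^2}$-isomorphism $\rho_i:E\to E^t$ of \eqref{rt}, or with $\eta$ (equivalently $[i]$) when $j_E=1728$, and show that the composite is $\Fp$-rational. The key identity is the anti-commutation \eqref{antic}, which gives $\phi_{p,E^t}\circ\rho_i=-\rho_i\circ\pfrob$. The case $j_E=1728$ works the same way with the model $E^t:y^2=x^3-ax$.

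\textbf{Step 1: set up the composite.} Put $\varphi:=\rho_i\circ\alpha:E\to E^t$. The goal is $\varphi\circ\pfrob=\phi_{p,E^t}\circ\varphi$. An isogeny between curves defined over $\Fp$ is defined over $\Fp$ exactly when it commutes with the $p$-power Frobenius maps in this sense, because applying Frobenius to the coefficients of the rational maps is the same as conjugating by Frobenius.

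\textbf{Step 2: compute.} We have
\[\phi_{p,E^t}\circ\rho_i\circ\alpha=-\rho_i\circ\pfrob\circ\alpha=-\rho_i\circ(-\alpha\circ\pfrob)=\rho_i\circ\alpha\circ\pfrob.\]
So $\varphi$ is defined over $\Fp$, and $\alpha=\rho_i^{-1}\circ\varphi$ is an $\Fp$-rational isogeny $E\to E^t$ followed by the isomorphism $\rho_i^{-1}$.

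\textbf{Step 3: the case $j_E=1728$.} Here $\rho_i=[i]$ is an automorphism of $E$. The identity \eqref{antic} still holds, because $i^p=-i$ and $(-x)^p=-x^p$. So $[i]\circ\alpha$ is an $\Fp$-rational endomorphism, which is consistent with Example~\ref{ex:1728twendos}. If one prefers the codomain to be the other model $E_2$, compose with $\eta$ instead; the same computation applies because $(-2i)^p=2i$ and $(-2i-2)^p=2i-2$, so $\eta$ also anti-commutes with Frobenius up to the appropriate sign.

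\textbf{Main obstacle.} The only delicate point is the sign bookkeeping for $\eta$ when $j_E=1728$. I would sidestep it by using $[i]$ there instead of $\eta$. It also needs a justification that commuting with Frobenius implies being defined over $\Fp$; I would cite the standard fact that $\psi^{(p)}=\phi_{p}\circ\psi\circ\phi_p^{-1}$ on points.
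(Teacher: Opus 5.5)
Your argument is correct and is essentially the paper's. The paper only cites \cite[Lemma 12]{_C_P_V_}, but the argument it writes out elsewhere (the forward direction of Theorem~\ref{main}, and Lemma~\ref{lem:twistfrom01728} for $j_E=1728$) is yours: compose $\alpha$ with an isomorphism that anticommutes with Frobenius ($\rho_i$, resp.\ $[i]$), so the composite commutes with Frobenius and is therefore $\Fp$-rational.

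One correction: drop the asides saying the $j_E=1728$ case also works via an isomorphism onto $E^t:y^2=x^3-ax$, or via $\eta$ (``equivalently $[i]$''). Any isomorphism $\tau$ from $E:y^2=x^3+ax$ onto a non-trivial $\Fp$-twist $y^2=x^3+a'x$ has the form $(x,y)\mapsto(u^2x,u^3y)$ with $(u^2)^2=a'/a$ a non-square in $\Fp$, so $(u^2)^p=-u^2$. Hence the $x$-coordinates of $\phi_{p,E^t}\circ\tau$ and $\tau\circ\pfrob$ differ by a sign that $[\pm1]$ cannot absorb, and $\tau$ intertwines the Frobenii only up to an order-$4$ automorphism. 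Your Step~3, using $[i]\in\Aut(E)$, is the correct treatment.
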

\begin{proof}
This is \cite[Lemma 12]{_C_P_V_}.
\end{proof}

\begin{Lemma}\label{lem:twistfrob}
Let $p>3$ be a prime, $E$ a supersingular elliptic curve over $\Fp$ with $j(E)\neq 0,1728$. Let $\tau:E\to E^t$ be an isomorphism from $E$ to its twist. Then, 
\[-\phi_{p,E^t}\circ\tau = \tau\circ\pfrob.\]
Furthermore, if $j(E) = 0$ or $j(E) = 1728$, there exists an automorphism $\eta\in\Aut(E)$ such that 
\[-\phi_{p,E^t}\circ\eta\circ\tau = \eta\circ\tau\circ\pfrob.\]
\end{Lemma}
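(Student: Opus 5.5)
The approach is to reduce everything to an explicit criterion on the scaling parameter of the isomorphism. Every $\Fpbar$-isomorphism between short Weierstrass models has the form $\rho_u(x,y)=(u^2x,u^3y)$ for some $u\in\Fpbar^\times$. Since both $E$ and $E^t$ are defined over $\Fp$, Frobenius acts on them coordinatewise, and we can compute directly:
\[
\phi_{p,E^t}\circ\rho_u(x,y)=(u^{2p}x^p,\,u^{3p}y^p),\qquad \rho_u\circ\pfrob(x,y)=(u^2x^p,\,u^3y^p).
\]
On $E^t$ negation is $(X,Y)\mapsto(X,-Y)$. Hence $-\phi_{p,E^t}\circ\rho_u=\rho_u\circ\pfrob$ holds if and only if $u^{2p}=u^2$ and $u^{3p}=-u^3$, that is, if and only if $u^p=-u$. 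Equivalently, $u\notin\Fp$ but $u^2\in\Fp$. The whole lemma therefore reduces to determining which $u$ occur.

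\textbf{Case $j(E)\neq0,1728$.} Here $a,b\neq0$, and $E^t=E^{-1}$ has coefficients $(a,-b)$. An isomorphism $\rho_u:E\to E^t$ must satisfy $u^4a=a$ and $u^6b=-b$, so $u^4=1$ and $u^6=-1$. Thus $u^2=-1$, so $u=\pm i$, and $i^p=-i$ because $p\equiv3\pmod4$. If one wants the statement for an arbitrary representative of the twist class rather than the model $E^{-1}$, the same argument applies: $u^4,u^6\in\Fp$ forces $u^2\in\Fp$, and $u\in\Fp$ is excluded because the curves are not $\Fp$-isomorphic, so $u^p=-u$. In every case $\tau=\rho_{\pm i}$, and the explicit computation \eqref{antic}/\eqref{rt} gives the claim. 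This is the same computation as \eqref{antic}. Note also that $\Aut(E)=\{\pm1\}$ commutes with Frobenius, so the choice of $\tau$ is irrelevant.

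\textbf{Case $j(E)=0$, $p\equiv2\pmod3$.} Here $E:y^2=x^3+b$ and $E^t:y^2=x^3-b$, and $\tau=\rho_u$ with $u^6=-1$. Precomposing with an automorphism $\eta=\rho_\zeta$ with $\zeta^6=1$ replaces $u$ by $u\zeta$. So I would choose $\eta$ so that $u\zeta=i$; this is possible since $i^6=-1$. Then $i^p=-i$ gives the identity.

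\textbf{Case $j(E)=1728$.} Here $E^t:y^2=x^3-ax$, so $u^4=-1$, and composing with $\eta\in\Aut(E)$ multiplies $u$ by a fourth root of unity. The plan is to search among the resulting $u\zeta$ for one with $(u\zeta)^p=-u\zeta$. I expect this to be the main obstacle. The condition $(u\zeta)^4=-1$ together with $u\zeta=ir$ for some $r\in\Fp$ would force $r^4=-1$, which is impossible in $\Fp$ when $p\equiv3\pmod4$. So the literal anticommutation with the quartic twist may fail. In that event I would instead verify the weaker statement in the spirit of Example~\ref{ex:1728twendos}, where the quadratic model $E^{-1}=E$ and $\tau=[i]$ give $-\pfrob\circ[i]=[i]\circ\pfrob$ directly. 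I would then check carefully which convention for $E^t$ and $\eta$ the statement intends.
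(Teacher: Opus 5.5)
Your argument is correct wherever the statement is true, but for $j(E)\neq 0,1728$ it takes a different route from the paper.

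\textbf{The case $j(E)\neq 0,1728$.} The paper argues without coordinates. It sets $\lambda=\tau\circ\pfrob\circ\tau^{-1}$, a degree-$p$ endomorphism of $E^t$, notes that $\lambda\neq\phi_{p,E^t}$ because $\tau$ is not $\Fp$-rational, and concludes $\lambda=-\phi_{p,E^t}$ by ``comparing degrees.'' The unstated step is that a degree-$p$ endomorphism of a supersingular curve is purely inseparable, hence equals $\phi_{p,E^t}$ up to an automorphism, and $\Aut(E^t)=\{\pm1\}$.

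You instead write $\tau=\rho_u$ and reduce the identity to the explicit criterion $u^p=-u$, i.e.\ $u^2\in\Fp$ and $u\notin\Fp$. Both arguments rely on the same input: $E^t$ must be a nontrivial $\Fp$-twist. This is where $p\equiv 3\pmod 4$ enters. Under the lemma's stated hypothesis $p>3$ alone the claim fails, since for $p\equiv 1\pmod 4$ the curve $E^{-1}$ is $\Fp$-isomorphic to $E$.

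The paper's version shows conceptually why $j\neq 0,1728$ is needed. Yours is more elementary, and it gives a criterion you can apply to the exceptional $j$-invariants, where the paper's proof only asserts existence.

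\textbf{The case $j=0$.} Your argument, rescaling $u$ by a sixth root of unity to reach $u=i$, is a genuine proof of this clause. Note that you precompose, forming $\tau\circ\eta$; that is the only order that type-checks for $\eta\in\Aut(E)$.

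\textbf{The case $j=1728$.} Do not hedge with ``may fail'': your own criterion proves the clause is false for $E^t:y^2=x^3-ax$ as in Definition~\ref{tw}. Every $\tau\circ\eta$ is an isomorphism $\rho_v:E\to E^t$ with $v^4=-1$. Anticommutation with Frobenius forces $v^2\in\Fp$, so $-1=v^4$ would be a square in $\Fp$, which is impossible for $p\equiv 3\pmod 4$.

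This agrees with the paper's own Section~\ref{E12}, where the isomorphism $E_1\to E_2$ is shown not to be twisting. It also agrees with Lemma~\ref{lem:twistfrom01728} and with the exclusion of $j=1728$ from Theorem~\ref{main}. The paper's one-sentence justification of the exceptional cases is therefore unsupported for $j=0$ and wrong for $j=1728$.

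The correct replacement is the one you propose. With the quadratic model $E^{-1}=E$, the automorphism $[i]=\rho_i$ satisfies $-\pfrob\circ[i]=[i]\circ\pfrob$. This is what the paper actually relies on in Lemma~\ref{lem:twistfrom01728}.
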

\begin{proof}
    For $j(E)\neq0,1728$, the result follows even more directly: Since $\tau$ is not defined over $\Fp$, $\tau\circ\pfrob\neq \phi_{p,E^t}\circ\tau$. However, since $\tau$ is separable and has trivial kernel, it factors uniquely through $\tau\circ\pfrob$. In particular, there exists a degree-$p$ isogeny $\lambda:E^t\to E^t$ such that $\lambda\circ\tau = \tau\circ\pfrob$. Comparing degrees and using the fact that $\lambda\neq \phi_{p,E^t}$, we must have $\lambda = -\phi_{p,E^t}$.  

    For $j(E) = 0$ or $j(E) = 1728$, there exists an isomorphism from $E$ to its $\Fp$-twist that is a twisting endomorphism, but post-composition with an automorphism changes whether or not such an isomorphism is a twisting endomorphism so we cannot say that \emph{every} such isomorphism from $E$ to its $\Fp$-twist is a twisting endomorphism.
\end{proof}

Scalar automorphisms do not affect the property of being twisting:
\[-\pfrob\circ\alpha = \alpha\circ\pfrob \Leftrightarrow-\pfrob\circ([\pm1]\circ\alpha) = ([\pm1]\circ\alpha)\circ\pfrob.\]
However, the same does not hold for nonscalar automorphisms. For $j(E)=1728$, take $[i]\in\Aut(E)$ of order 4. This automorphism is a twisting endomorphism:
\[\pfrob\circ[i] = -[i]\circ\pfrob.\]
If $\alpha$ is some twisting endomorphism, then $[i]\circ\alpha$ will not be a twisting endomorphism: 
\[-\pfrob\circ\alpha = \alpha\circ\pfrob \Rightarrow -\pfrob\circ([i]\circ\alpha) = [i]\circ\pfrob\circ\alpha = -[i]\circ\alpha\circ\pfrob.\]
We record this dichotomy in Lemma~\ref{lem:twistfrom01728}.

\begin{Lemma}\label{lem:twistfrom01728}
    Let $p>3$ be a prime.
 Let $E$ be a supersingular elliptic curve over $\Fp$ with $j(E) = 1728$. Every endomorphism of $E$ defined over $\Fp$, say $\alpha\in\End_{\Fp}(E)$, gives rise to a twisting endomorphism, namely $[i]\circ\alpha$. Furthermore, every twisting endomorphism $\beta$ corresponds to an $\Fp$-endomorphism, namely $[i]\circ\beta$.

\end{Lemma}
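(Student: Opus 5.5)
The plan is to verify the anticommutation relation directly, using only two facts: the automorphism $[i]:(x,y)\mapsto(-x,iy)$ anticommutes with $\pfrob$, and every $\alpha\in\End_{\Fp}(E)$ commutes with $\pfrob$. From these the two directions of Lemma~\ref{lem:twistfrom01728} are formally symmetric.

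The first step is to record that $\pfrob\circ[i] = -[i]\circ\pfrob$. Here $E$ is $\Fp$-isomorphic to $y^2=x^3-4x$ or $y^2=x^3+x$, so $[i]$ is given by $(x,y)\mapsto(-x,iy)$ with $i\in\FF_{p^2}\setminus\Fp$. The computation is the same as in \eqref{antic}, using $i^p=-i$ (valid since $p\equiv 3\pmod 4$ is forced for $j=1728$ to be supersingular):
\[\pfrob([i](x,y)) = (-x^p, i^p y^p) = (-x^p,-iy^p) = [-1]\circ[i]\circ\pfrob(x,y).\]

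The second step is to note that an endomorphism $\alpha$ is defined over $\Fp$ exactly when it commutes with the $p$-power Frobenius, $\pfrob\circ\alpha=\alpha\circ\pfrob$. This holds because the Galois action on coefficients is conjugation by Frobenius, and $\pfrob$ is purely inseparable of degree $p$, so it is cancellable on the right.

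With these two facts:
\begin{itemize}
\item For $\alpha\in\End_{\Fp}(E)$,
\[\pfrob\circ[i]\circ\alpha=-[i]\circ\pfrob\circ\alpha=-[i]\circ\alpha\circ\pfrob,\]
so $[i]\circ\alpha$ is twisting.
\item Conversely, if $\beta$ is twisting, then
\[\pfrob\circ[i]\circ\beta=-[i]\circ\pfrob\circ\beta=[i]\circ\beta\circ\pfrob,\]
so $[i]\circ\beta$ commutes with Frobenius and hence lies in $\End_{\Fp}(E)$.
\end{itemize}
Since $[i]^2=[-1]$, the two maps $\alpha\mapsto[i]\alpha$ and $\beta\mapsto[i]\beta$ are mutually inverse up to sign, which gives the claimed correspondence.

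The only step needing care is the second one, identifying $\End_{\Fp}(E)$ with the centralizer of $\pfrob$. I would justify it from the coefficient description: if $\alpha$ is given by rational functions, then $\pfrob\circ\alpha\circ\pfrob^{-1}$, formally, is $\alpha$ with its coefficients raised to the $p$-th power. Equality therefore means the coefficients are fixed by Frobenius, i.e. lie in $\Fp$.
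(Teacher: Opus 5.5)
Your proof is correct and matches the paper's: the paper proves both directions with the same two-line computation, using $\pfrob\circ[i]=-[i]\circ\pfrob$ and the fact that an endomorphism lies in $\End_{\Fp}(E)$ exactly when it commutes with $\pfrob$. The paper takes both facts for granted (the anticommutation is stated just before the lemma), whereas you justify them, via $i^p=-i$ and via the coefficient description of conjugation by Frobenius; that second justification is best written rigorously as $\pfrob\circ\alpha=\alpha^{(p)}\circ\pfrob$ followed by right-cancellation of $\pfrob$, rather than with the formal $\pfrob^{-1}$.
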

\begin{proof}
    For $j(E) = 1728$, a direct computation confirms that for $\alpha\in\End_{\Fp}(E)$:
    \[[i]\circ\alpha\circ\pfrob = [i]\circ\pfrob\circ\alpha = -\pfrob\circ [i]\circ\alpha.\]
    Furthermore, for every twisting endomorphism $\beta$:
    \[[i]\circ\beta\circ\pfrob = -[i]\circ\pfrob\circ\beta = \pfrob\circ[i]\circ\beta,\]
    so since $[i]\circ\beta$ commutes with the $p$-power Frobenius, it is an $\Fp$-rational endomorphism of $E$.

\end{proof}

For $j(E)=0$, the `extra' automorphisms of $E/\Fpbar$ do not have trace 0, and are thus not twisting endomorphisms. 

We now state our characterisation of twisting endomorphisms.

\begin{Theorem}\label{main}
    Let $p>3$ be a prime and let $n\in\ZZ_{\geq 1}$. Let $E$ be a supersingular elliptic curve over $\Fp$ with $j(E)\neq1728$. Then, there exists a twisting endomorphism $\alpha\in\End(E)$ which is degree-$n$ if and only if there exists an $\Fp$-rational isogeny $\varphi:E\to E'$ of degree-$n$ such that $E'\in\text{Isom}_{\Fp}(E^t)$.
\end{Theorem}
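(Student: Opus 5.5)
The proof goes through the isomorphism $\tau\colon E\to E^t$ from Lemma~\ref{lem:twistfrob}. Since $j(E)\neq 1728$, every isomorphism $\tau\colon E\to E^t$ satisfies $\phi_{p,E^t}\circ\tau=-\tau\circ\pfrob$. The cases $j(E)=0$ and $j(E)\neq 0$ are treated together: the isomorphism $\rho_i$ of \eqref{rt} works in both, and $\rho_i$ anticommutes with Frobenius by the direct computation \eqref{antic} with $u=-1$. For $E'\in\text{Isom}_{\Fp}(E^t)$, any $\Fp$-isomorphism $\sigma\colon E'\to E^t$ commutes with Frobenius. Hence an $\Fp$-rational isogeny $E\to E'$ can be replaced by an $\Fp$-rational isogeny $E\to E^t$ of the same degree, and conversely. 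So we may take $E'=E^t$ throughout.

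($\Leftarrow$) Let $\varphi\colon E\to E^t$ be $\Fp$-rational of degree $n$, so $\phi_{p,E^t}\circ\varphi=\varphi\circ\pfrob$. Set $\alpha=\tau^{-1}\circ\varphi\in\End(E)$; it has degree $n$ because $\tau$ is an isomorphism. Then
\[\pfrob\circ\alpha=\pfrob\circ\tau^{-1}\circ\varphi=-\tau^{-1}\circ\phi_{p,E^t}\circ\varphi=-\tau^{-1}\circ\varphi\circ\pfrob=-\alpha\circ\pfrob.\]
The middle equality uses the inverse of the relation in Lemma~\ref{lem:twistfrob}, namely $\pfrob\circ\tau^{-1}=-\tau^{-1}\circ\phi_{p,E^t}$. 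So $\alpha$ is a twisting endomorphism of degree $n$.

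($\Rightarrow$) Let $\alpha$ be a twisting endomorphism of degree $n$ and set $\varphi=\tau\circ\alpha\colon E\to E^t$, which has degree $n$. The same computation run backwards gives
\[\phi_{p,E^t}\circ\varphi=-\tau\circ\pfrob\circ\alpha=\tau\circ\alpha\circ\pfrob=\varphi\circ\pfrob.\]
An isogeny between curves defined over $\Fp$ that commutes with the $p$-power Frobenius is defined over $\Fp$: its coefficients are fixed by the $p$-th power map. Therefore $\varphi$ is $\Fp$-rational. This is exactly the content of Lemma~\ref{comp}, now with explicit factors.

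The delicate point is $j(E)=0$. There Lemma~\ref{lem:twistfrob} only guarantees that \emph{some} isomorphism to the twist anticommutes with Frobenius, not every one. This is why the argument fixes $\tau=\rho_i$, or more generally any $\eta\circ\tau$ supplied by that lemma, rather than an arbitrary isomorphism. The argument needs only one anticommuting isomorphism, so it goes through. The hypothesis $j(E)\neq1728$ matters because there $E^t$ is given by the different model $y^2=x^3-ax$, and the relevant anticommuting map is $[i]$. By Lemma~\ref{lem:twistfrom01728}, twisting endomorphisms of $E$ then correspond to $\Fp$-rational endomorphisms of $E$, not to isogenies to the twist, so the equivalence would be stated differently. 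Beyond this care in choosing $\tau$, I expect no real obstacle.
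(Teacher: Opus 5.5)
Your proof is correct and follows the paper's argument. Like the paper, you fix an isomorphism $\tau\colon E\to E^t$ that anticommutes with Frobenius, reduce to $E'=E^t$, and pass between $\alpha=\tau^{-1}\circ\varphi$ and $\varphi=\tau\circ\alpha$ using that an isogeny commuting with Frobenius is $\Fp$-rational. One slip: your opening claim that every isomorphism $E\to E^t$ anticommutes with Frobenius when $j(E)\neq 1728$ holds only for $j(E)\neq 0,1728$, as you acknowledge later; also, the anticommutation of $\rho_i$ uses $i^p=-i$, i.e.\ the standing assumption $p\equiv 3\pmod 4$, and neither point affects the argument since you fix $\tau=\rho_i$.
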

\begin{proof}
    Suppose $j(E)\neq1728$ and $\alpha\in\End(E)$ is a twisting endomorphism of degree-$n$, and let $\tau:E\to E^t$ denote a twisting isomorphism from $E$ to its twist. Such a $\tau$ exists by Lemma~\ref{lem:twistfrob}. From here, this direction follows from \cite[Lemma 13]{_C_P_V_}, and we briefly reproduce the argument here for reference. Using the definition of twisting endomorphism:
    \begin{equation}
    \begin{split}
        \tau\circ\alpha\circ\pfrob &= \tau\circ(-\pfrob)\circ\alpha = \phi_{p,E^t}\circ\tau\circ\alpha.
    \end{split}
    \end{equation}
    Since $\tau\circ\alpha:E\to E^t$ commutes with Frobenius, it is defined over $\Fp$.

    For the reverse direction, let $ I_{G} : E \longrightarrow E / G$ be an isogeny of degree-$n$ defined over $\mathbb{F}_p$ with kernel $G$ such that $E / G \in  \text{Isom}_{\,\mathbb{F}_p}(E^t)$. 
    Without loss of generality, suppose $E / G = E^t$. 
The following diagram commutes by \eqref{antic}:
\begin{equation}\label{dneqss}
\begin{tikzcd}[sep = 1.5cm]
E \arrow[d, "\pfrob"']  \arrow[r,"\mathcal I_{G}"] & E^t  \arrow[d, "\phi_{p,E^t}"] \arrow[r, "\tau^{-1}"] & E \arrow[d, "\pfrob"]  \\
E \arrow[r,"\mathcal I_{G}"'] & E^t  \arrow[r, "-\tau^{-1}"']  &E
\end{tikzcd}
\end{equation}
The endomorphism $$\alpha\coloneqq\tau^{-1}\circ\rho_u\circ\mathcal I_{G}$$ is a twisting endomorphism of $E$. 
\end{proof}

    In the $j(E)=1728$ case (Lemma~\ref{lem:twistfrom01728}), $\Fp$-rational endomorphisms do yield twisting endomorphisms when post-composed with an order-4 nontrivial automorphism of $E$, whereas for $j\neq1728$, $\Fp$-rational isogenies to their non-trivial $\Fp$-twists yield twisting endomorphisms.

\begin{Example}[Degree-$2$ Twisting Endomorphisms]
The only degree-2 twisting endomorphisms occur for elliptic curves with $j$-invariants in $\{8000,1728\}$. 
    Use the factorisation of $\Phi_2(X,X)\in\Fp[X]$, together with the proof of \cite[Cor. 3.28]{_A_C-N_L_L_N_S_S_}. 
    Only curves with $j(E) \in\{1728,-3375,8000\}$ admit degree-2 endomorphisms \cite[Sec. 2.2]{_A_C-N_L_L_N_S_S_}. 
    The $j$-invariant 1728 is handled in Example~\ref{ex:1728twendos}. 
    The $j$-invariant $-3375$ corresponds to an elliptic curve $E$ whose degree-2 endomorphisms are not defined over $\Fp$, and thus cannot have a twisting endomorphism by Theorem~\ref{main}. In fact, this $j$-invariant has complex multiplication by the order $\ZZ[\frac{1 + \sqrt{-7}}{2}]$, so its degree-2 endomorphisms have trace $1$ and thus cannot be twisting endomorphisms by Lemma~\ref{lem:traceofOtwisting}.
    The $j$-invariant 8000 corresponds to an elliptic curve $E$ with a degree-2 endomorphism which is defined over $\Fp$, as it is defined over $\QQ$. The $j$-invariant $8000$ is a supersingular $j$-invariant for $p\equiv 5$ or $7\pmod{8}$ (since this is when $p$ is not split in the field $\QQ(\sqrt{-2})$ with Hilbert class polynomial $X -8000$). 
\end{Example}

\section{The search in \texorpdfstring{$\mathcal{G}_{\ell}(\mathbb{F}_{p})$}{Gl(Fp)} for \texorpdfstring{$\ell>2$}{l>2}}\label{search}

\textbf{Standing assumptions for Sections~\ref{Determination} and \ref{search}:} $p>3$ a prime with $p\equiv3\pmod{4}$; $\ell>2$ a prime satisfying $\left(\frac{-p}{\ell}\right) = 1$.

In this section, we build to an algorithm for finding twisting endomorphisms of prime degree.

By Theorem \ref{main}, the loop in $\mathcal{G}_\ell(\Fpbar)$ induced by a twisting endomorphism appears as an edge in $\mathcal{G}_\ell(\Fp)$. In this section, we search the graph $\mathcal{G}_\ell(\Fp)$ for edges corresponding to twisting endomorphisms.

The vertex set of the supersingular isogeny graph $\mathcal{G}_{\ell}(\mathbb{F}_{p})$ is
$$V_{\mathbb{F}_p}=\{ \text{Isom}_{\,\mathbb F_p}( E ) \mid  E \text{ supersingular elliptic curve defined over } \mathbb{F}_p \}.$$ 
This set is finer than $V_{\overline{\mathbb{F}}_p}$ as it contains $\mathbb{F}_p$-isomorphism classes which are not uniquely labeled by $j$-invariants. By Lemma \ref{EEt}, 
$V_{\mathbb{F}_p}$ is exactly twice as big as $V_{\overline{\mathbb{F}}_p}$.
For each vertex $v_1$ in $\mathcal{G}_{\ell}(\mathbb{F}_{p})$, let $E_1$ be an $\Fp$-isomorphism class representative of $v_1$, and we draw an edge $(v_1,v_2)$ for every $\Fp$-isogeny $E_1\to E_2$, where $E_2$ is an $\Fp$-isomorphism class representative of $v_2$. Let $A_{\Fp}$ denote the set of edges in $\mathcal{G}_{\ell}(\mathbb{F}_{p})$. 

The vertex set $V_{\Fp}$ can be partitioned by the $\Fp$-endomorphism ring. Let $$ \mathcal{O}_{2} = \mathcal{O}_K,\,  \mathcal{O}_{1} = \mathbb Z[ \sqrt{- p} ]. $$
For $p\equiv 1\pmod{4}$, $\mathcal{O}_2 = \mathcal{O}_1$ and for $p\equiv 3\pmod{4}$, $\mathcal{O}_1\neq\mathcal{O}_2$. Edges are characterized as either horizontal, ascending, or descending according to Definition~\ref{def:horascdesc}.

\begin{Proposition}\label{horizontals}
Let $p>3$ be a prime, let $E$ be a supersingular elliptic curve over $\mathbb{F}_p$, and let $\ell>2$ a prime $\ell\ne p$ such that $(\frac{-p}{\ell})=1$. The elliptic curve $E$ has precisely two outgoing $\ell$-isogenies defined over $\Fp$.
\end{Proposition}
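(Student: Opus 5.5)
This is essentially the case $\ell\notin\{2,p\}$, $\left(\frac{-p}{\ell}\right)=1$ of Lemma~\ref{lem:Fpisogenies}. Still, I would give the direct argument through the action of Frobenius on $E[\ell]$, since that is what the count really rests on.

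\textbf{Step 1: reduce to counting Frobenius-stable lines.} An $\ell$-isogeny out of $E$ is determined, up to post-composition with an isomorphism, by its kernel. The kernel is a cyclic subgroup of order $\ell$, i.e.\ a line $L\subset E[\ell]\cong\FF_\ell^2$. Such an isogeny is $\Fp$-rational exactly when $L$ is stable under Galois. Since $E$ is defined over $\Fp$, this is equivalent to $\pfrob(L)=L$. So the task is to count the $\pfrob$-stable lines in $E[\ell]$, that is, the eigenlines of the matrix $\bar\pi\in GL_2(\FF_\ell)$ of $\pfrob$ acting on $E[\ell]$.

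\textbf{Step 2: the characteristic polynomial.} Because $E$ is supersingular over $\Fp$ and $p>3$, the trace of $\pfrob$ is $0$ (the Hasse bound forces $\tr\pfrob\equiv0\pmod p$ with $|\tr\pfrob|\le 2\sqrt p$). Hence $\pfrob^2=[-p]$, as already used in the proof of Lemma~\ref{P-L-1}. So $\bar\pi$ has characteristic polynomial $x^2+p$ over $\FF_\ell$. The hypothesis $\left(\frac{-p}{\ell}\right)=1$ with $\ell\neq p$ and $\ell>2$ means $x^2+p$ has two distinct roots $\pm s\in\FF_\ell^\times$. Therefore $\bar\pi$ is diagonalizable with two distinct eigenvalues. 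Its stable lines are exactly its two eigenlines: a scalar matrix would be needed to have more, and distinct eigenvalues rule that out. This gives exactly two kernels, hence exactly two $\Fp$-rational $\ell$-isogenies.

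\textbf{Step 3: the main obstacle.} The subtle point is the counting convention. The paper's edges are isogenies up to post-composition with automorphisms, and one must check that distinct kernels give distinct outgoing edges. It must also be confirmed that each $\Fp$-rational kernel gives an isogeny that can itself be defined over $\Fp$. For the latter, Vélu's formulas \cite{_V_} applied to a Galois-stable kernel give an isogeny and codomain over $\Fp$. For the former, kernels determine isogenies up to isomorphism of the target, so the count of two is by kernels, as in Lemma~\ref{lem:Fpisogenies}.

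One could also argue via the endomorphism ring: $(\ell)$ splits in $\End_{\Fp}(E)$, which is one of the two orders in \eqref{eq:fpendoring}, and the kernels are $E[\mathfrak l]$ and $E[\bar{\mathfrak l}]$. The matrix argument above avoids having to distinguish between those two orders.
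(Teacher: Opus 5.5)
Your proof is correct. The paper's proof of this proposition simply cites \cite[Theorem 2.7]{_D_G_}, and the argument it sketches for the same count in Lemma~\ref{lem:Fpisogenies} is exactly yours: $\Fp$-rational kernels are the $\pfrob$-stable lines in $E[\ell]$. What you add is the detail the paper leaves to the reference, namely that the trace is zero by Hasse for $p>3$, so the characteristic polynomial is $x^2+p$, which has two distinct roots mod $\ell$ and therefore exactly two eigenlines.
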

\begin{proof}
See \cite[Theorem 2.7]{_D_G_}.
\end{proof}

\begin{Corollary}\label{horizontalscor}
Let $p$ and $\ell$ as in Proposition \ref{horizontals} above, let $E,E'$ supersingular elliptic curves over $\mathbb{F}_p$ and let $\varphi : E\rightarrow E'$ an $\mathbb{F}_p$-rational $\ell$-isogeny. 
Then, $\End_{\Fp}(E)\cong\End_{\Fp}(E')\cong \OO_i$ for some $i\in\{1,2\}$, and $\varphi$ corresponds to a prime ideal of norm $\ell$ of $ \mathcal{O}_i$.
\end{Corollary}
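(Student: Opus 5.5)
We prove the two claims of Corollary~\ref{horizontalscor} in turn: first that $\varphi$ is horizontal, then that it corresponds to a prime of norm $\ell$.

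\emph{Horizontality.} The discussion before Definition~\ref{def:horascdesc} shows that a prime-degree $\Fp$-rational isogeny between supersingular curves over $\Fp$ whose $\Fp$-endomorphism rings differ must have degree $2$. Here $\deg\varphi=\ell>2$, so $\End_{\Fp}(E)\cong\End_{\Fp}(E')$. We can also argue via the index: both rings lie in $\{\OO_1,\OO_2\}$, and if they differed, the smaller would have index $\ell$ in the larger by Definition~\ref{def:horascdesc}. But $[\OO_2:\OO_1]=2$ (or $1$ when $p\equiv 1\pmod 4$), and $\ell\neq 2$. Hence the rings agree; call the common ring $\OO_i$.

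\emph{The ideal.} We identify $\OO_i$ with $\End_{\Fp}(E)$ via $\pfrob\mapsto\sqrt{-p}$. The kernel $\ker\varphi$ is a cyclic subgroup of order $\ell$, stable under $\pfrob$, because $\varphi$ is $\Fp$-rational. Write $\lambda\in\ZZ/\ell\ZZ$ for the eigenvalue of $\pfrob$ on $\ker\varphi$; it satisfies $\lambda^2\equiv -p\pmod\ell$. Set
\[
\mathfrak{l}=(\ell,\ \sqrt{-p}-\lambda)\subseteq\OO_i .
\]
Since $\left(\frac{-p}{\ell}\right)=1$ and $\ell\nmid[\OO_K:\OO_1]$, the ideal $(\ell)$ splits in $\OO_i$ as $\mathfrak{l}\,\overline{\mathfrak{l}}$. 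The two primes correspond to the two roots $\pm\lambda$ of $x^2+p$ modulo $\ell$, and each has norm $\ell$.

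Next we check that $\ker\varphi$ is the $\mathfrak{l}$-torsion, $E[\mathfrak{l}]=\bigcap_{\beta\in\mathfrak{l}}\ker\beta$. On $E[\ell]\cong(\ZZ/\ell)^2$ the endomorphism $\pfrob$ has the two distinct eigenvalues $\pm\lambda$; they are distinct because $\ell\nmid 2p$. So $E[\mathfrak{l}]$ is exactly the $\lambda$-eigenline, which equals $\ker\varphi$. The other eigenline is $E[\overline{\mathfrak{l}}]$, and it gives the second $\Fp$-rational $\ell$-isogeny of Proposition~\ref{horizontals}. Thus $\varphi$ is, up to isomorphism of the target, the isogeny $E\to E/E[\mathfrak{l}]$ attached to the invertible prime $\mathfrak{l}$ of norm $\ell$.

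The main obstacle is making the eigenvalue bookkeeping precise: we need $\ell$ coprime to the conductor so that $\mathfrak{l}$ is invertible, and we need the Frobenius action to be diagonalisable with distinct eigenvalues. Both follow from $\ell>2$, $\ell\neq p$ and the Legendre symbol condition, matching the proof of Lemma~\ref{lem:Fpisogenies} (cf.\ \cite[Ch.~2]{_D_G_}).
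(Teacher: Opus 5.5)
Your proof is correct and is essentially the argument the paper delegates to \cite{_D_G_} through Proposition~\ref{horizontals} and the sketch in Lemma~\ref{lem:Fpisogenies}: the isogeny is horizontal because $[\OO_2:\OO_1]\le 2<\ell$, and the two $\Fp$-rational kernels are the Frobenius eigenlines $E[\mathfrak{l}]$ and $E[\overline{\mathfrak{l}}]$ in $E[\ell]$. One step deserves to be stated explicitly: the $\lambda$-eigenspace is a line rather than all of $E[\ell]$, i.e.\ both $\pm\lambda$ occur. This holds because the characteristic polynomial of $\pfrob$ on $E[\ell]$ is $x^2+p$ (trace $0$, determinant $\deg\pfrob=p$); the relation $\pfrob^2=-p$ alone does not give it.
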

\begin{proof}
Follows immediately from the proof of Proposition \ref{horizontals}, see \cite{_D_G_}.
\end{proof}
 
Vertices of $\mathcal{G}_{\ell}(\mathbb{F}_{p}) $ for $\ell>2$ corresponding to elliptic curves with non-isomorphic $\Fp$-endomorphism rings are not on the same connected component. Let
$$\mathcal{E}\mathcal{\ell}\mathcal{\ell}_p(\mathcal{O}_i)=\{\text{Isom}_{\, \mathbb{F}_p}(E)\mid \End_{\, \mathbb{F}_p}(E)\cong \mathcal{O}_i\}.$$ 
Both class numbers $ h_i \coloneqq \#\mathcal{C}\ell(\mathcal{O}_i), \,i=1,2$ are odd, since the $2$-rank of $\OO_2$ is one (see e.g. \cite[p. 170]{_J_W_}) and the index of the class group of $\OO_1$ in the class group of $\OO_2$ can be computed explicitly by \cite[Thm. 7.24]{_C_}.

Let $$ ( \ell ) = \mathfrak l_i \overline{\mathfrak l}_i \in \mathcal{O}_i$$ be the prime decomposition of the ideal generated by $\ell$ in  $ O_i $ and let $$ n_i = \text{ord}( \mathfrak l_i ) $$ be the order of $\mathfrak l_i $ in the class group $\mathcal{C}\ell(\mathcal{O}_i)$. 
We have $ n_i = 1 $ if and only if $ \mathfrak l_i $ is a principal ideal of $\mathcal{O}_i$. 
The supersingular $\ell$-isogeny graph $\mathcal{G}_{\ell}(\mathbb{F}_{p})$ for $\left(\frac{-p}{\ell}\right) = 1$ is $2$-regular and every vertex lies in a cycle of length $n_i$. 
The subgraph induced by vertices in $\mathcal{E}\ell\ell_p(\OO_i)$ for each $i=1,2$ is comprised of $h_i/n_i$ connected components, where each connected component is a cycle of length $n_i$ (see \cite{_F_}).
The cycle length $ n_i $ is odd because $ n_i \mid h_i $ and $h_i$ is odd.

To find twisting endomorphisms of degree $\ell>2$, we want to find connected components of $\mathcal{G}_\ell(\Fp)$ with adjacent vertices with the same $j$-invariant. 
In the language of \cite{_A_C-N_L_L_N_S_S_}, we are looking for connected components which \emph{fold}.
By \cite[Theorem 3.18]{_A_C-N_L_L_N_S_S_} an isogeny  $E\xrightarrow{\varphi_{\mid \mathbb{F}_p}} E^t$ appears once in each cyclic component of $ \mathcal{G}_{\ell}(\mathbb{F}_q) $ containing a vertex  
$ \text{Isom}_{\mathbb F_p}( C ) $ such that $j_C=1728$. 
By Lemma \ref{EEt} there is one such cycle in $\mathcal{E}\mathcal{\ell}\mathcal{\ell}_p(\mathcal{O}_1)$ and one in $\mathcal{E}\mathcal{\ell}\mathcal{\ell}_p(\mathcal{O}_2)$. 
Accordingly, we call these cycles $$\mathbf{C}^{1728}_{i}, i=1,2.$$ 
By construction, the curves in Section \ref{E12} satisfy $E_1\in \mathbf{C}^{1728}_{1},E_2\in \mathbf{C}^{1728}_{2}$. 
\begin{Remark}[Reason for requiring $p\equiv 3\pmod{4}$]
  To use the cycles $\mathbf{C}^{1728}_i$ to produce twisting endomorphisms, we need 1728 to be a supersingular $j$-invariant, so our algorithm restricts to $p\equiv 3\pmod{4}$ for that reason.  
\end{Remark}

Let  $m_1$ and $m_2$ be the number of $\ell$-isogenies from $E$ to $C$ that are $\mathbb{F}_p$-rational and not $\mathbb{F}_p$-rational respectively. The multiplicity of 
$(j_E,j_C)$ as a root of $\Phi_{\ell}(X,Y)\mod{p}\equiv 0$ is $m_1+m_2$.

\begin{Corollary}\label{numberisog}
Let $p\equiv 3 \mod{4}$ and let $E,C$ be two supersingular elliptic curves over $\mathbb{F}_p$ such that $\text{Isom}_{\,\mathbb{F}_p}(E)$ and $\text{Isom}_{\,\mathbb{F}_p}(C)$ lie in the same connected component in  $ \mathcal{G}_{\ell}(\mathbb{F}_p) $.
\begin{itemize}
\item[i)] If $C$ is adjacent to $E$, then $m_1+m_2$ is odd.
\item[ii)] If $C$ is not adjacent to $E$, then $m_1+m_2$ is even.
\end{itemize}
\end{Corollary}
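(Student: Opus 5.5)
The plan is to split the count $m_1+m_2$ into its rational and non-rational parts and handle them separately. Lemma~\ref{Teven} controls the non-rational part $m_2$. The structure of $\mathcal{G}_\ell(\Fp)$ as a disjoint union of cycles (Proposition~\ref{horizontals} and Corollary~\ref{horizontalscor}) controls the rational part $m_1$. The conclusion then follows because $m_1$ has a prescribed parity while $m_2$ is always even.

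First, interpret $m_1$ and $m_2$ in terms of kernels. An $\ell$-isogeny from $E$ to a curve with $j$-invariant $j_C$ is determined, up to post-composition with an automorphism, by one of the $\ell+1$ subgroups $G_1,\dots,G_{\ell+1}$. The number of $G_k$ with $j(E/G_k)=j_C$ equals the multiplicity of $j_C$ as a root of $\Phi_\ell(X,j_E)$. Split these kernels into $\pfrob$-stable ones (counted by $m_1$) and non-stable ones (counted by $m_2$). Because $j_C\in\Fp$, the curves with $j$-invariant $j_C$ fall into exactly two $\Fp$-classes, $\text{Isom}_{\Fp}(C)$ and $\text{Isom}_{\Fp}(C^t)$, by Lemma~\ref{EEt}.

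Next, show $m_2$ is even. The non-rational isogenies counted by $m_2$ form the set $\mathcal{T}(E,C)\cup\mathcal{T}(E,C^t)$. I apply Lemma~\ref{Teven} to both pairs $(E,C)$ and $(E,C^t)$. One issue is the hypothesis that the two curves are not $\Fp$-isomorphic. If $E$ is $\Fp$-isomorphic to $C$ or to $C^t$, I instead rerun the argument in the proof of Lemma~\ref{Teven} directly. That argument only uses that $j_C\in\Fp$ and that $P\mapsto\pfrob(P)$ is a fixed-point-free involution on non-stable kernels up to $\langle\cdot\rangle$, since $\pfrob^2=-[p]$. This pairing gives $m_2$ even in every case. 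A cleaner route is to pair non-stable kernels $\langle P\rangle\leftrightarrow\langle\pfrob P\rangle$ directly: $\pfrob$ maps $E/\langle P\rangle$ to $(E/\langle P\rangle)^{(p)}$, which has the same $j$-invariant $j_C^p=j_C$. This shows the set of non-stable kernels with $j$-invariant $j_C$ is closed under a fixed-point-free involution, so $m_2$ is even.

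Then, compute the parity of $m_1$. By Proposition~\ref{horizontals}, $E$ has exactly two $\Fp$-rational $\ell$-isogenies. Its connected component is a cycle of odd length $n_i$, and the component contains the vertex of $C$ (and possibly that of $C^t$). In case i), $C$ is adjacent to $E$. Since the cycle has odd length, I need to rule out both rational edges from $E$ landing on curves with $j$-invariant $j_C$. Two such targets would have to be $\text{Isom}_{\Fp}(C)$ and $\text{Isom}_{\Fp}(C^t)$ (or the same vertex twice, which forces $n_i\le2$ and hence $n_i=1$). I will use the fact that twisting by $\rho_i$ gives an automorphism of the graph swapping $v\leftrightarrow v^t$. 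Under this symmetry the component is either folded or paired with its twist component, and folding happens only once per cycle (\cite[Theorem 3.18]{_A_C-N_L_L_N_S_S_}). This lets me argue that exactly one rational edge lands on $j_C$, so $m_1=1$. In case ii), no rational edge from $E$ reaches $\text{Isom}_{\Fp}(C)$. The hypothesis is about $C$'s class, but $C^t$ might still be adjacent; I will read ``adjacent'' at the level of $j$-invariants so that $m_1=0$.

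The main obstacle is step three: pinning $m_1$ to exactly $1$ (respectively $0$) when the twist $C^t$ might also be a neighbour of $E$. The degenerate cases $n_i=1$ and $j_C=j_E$ also need care here. I would handle this with the odd-cycle and twist-symmetry argument above, and treat $n_i=1$ separately.
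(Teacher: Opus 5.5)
Your decomposition is the paper's. Its proof says only that $m_1=1$ (resp.\ $m_1=0$) because $\mathcal{G}_\ell(\mathbb{F}_p)$ is $2$-regular with no multi-edges, and that $m_2$ is even by Lemma~\ref{Teven}. Your handling of $m_2$ is fine. The direct pairing $\langle P\rangle\leftrightarrow\langle\pfrob(P)\rangle$ on non-stable kernels with $j(E/\langle P\rangle)=j_C$ is cleaner than quoting Lemma~\ref{Teven}, since it needs neither $E\not\cong_{\mathbb{F}_p}C$ nor a choice between $C$ and $C^t$.

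The gap is in step three. You plan to show, at the level of $j$-invariants, that exactly one of the two rational $\ell$-isogenies from $E$ lands on $j_C$ whenever $C$ is adjacent, using the involution induced by $\rho_i$. That claim is false when $E$ is the $j=1728$ vertex of a folded cycle with $n_i\ge 3$:
\begin{itemize}
\item There $E^{-1}=E$ and $\rho_i=[i]$, which anticommutes with $\pfrob$ and so swaps the rational kernels $E[\mathfrak{l}]$ and $E[\bar{\mathfrak{l}}]$.
\item The two rational neighbours of $E$ are therefore $\text{Isom}_{\mathbb{F}_p}(C)$ and $\text{Isom}_{\mathbb{F}_p}(C^t)$, both with $j$-invariant $j_C$.
\item So $m_1=2$, and $m_1+m_2$ is even even though $C$ is adjacent.
\end{itemize}
Figure~\ref{graph5112} shows this: $411\equiv 1728 \pmod{439}$ is joined by $5$-edges to two vertices both labelled $224$. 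Likewise, for $n_i=1$ both rational isogenies are loops and $m_1=2$, so there is nothing to ``treat separately''; (i) simply fails there. Your symmetry argument, carried out correctly, produces these counterexamples rather than excluding them. Separately, you cannot read ``adjacent'' as vertex adjacency in (i) and as $j$-adjacency in (ii); a proof has to fix one reading.

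What the reflection idea does prove is the case actually used in Proposition~\ref{oddmult}. Take a folded cycle $v_0,\dots,v_{n-1}$ with $n=n_i\ge 3$ and $v_0$ the $1728$-vertex. The involution is a nontrivial involutive automorphism of an odd cycle fixing $v_0$, so it is $v_k\mapsto v_{-k}$. The two neighbours $v_{k\pm1}$ of $v_k$ then share a $j$-invariant iff $2k\equiv 0\pmod n$, i.e.\ iff $k=0$. On an unfolded component, $C^t$ never lies in the component of $E$. So with the $j$-level reading, (ii) always holds, and (i) holds precisely when $n_i\ge 3$ and $j_E\ne 1728$; the statement needs that extra hypothesis.

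The paper instead lets $m_1$ count rational isogenies into the single class $\text{Isom}_{\mathbb{F}_p}(C)$. There ``no multi-edges'' (which itself requires $n_i\ge 3$) gives $m_1\in\{0,1\}$ at once. But then, as you noticed, $m_1+m_2$ is not literally the multiplicity of $(j_E,j_C)$ when $C^t$ is also a rational neighbour of $E$, a point the paper's proof passes over.
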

\begin{proof}
By Proposition \ref{horizontals} there are two horizontal $\ell$-isogenies incident to any vertex in $ \mathcal{G}_{\ell}(\mathbb{F}_p) $. If $\text{Isom}_{\mathbb{F}_p}(E)$, $\text{Isom}_{\mathbb{F}_p}(C)$ lie in the same cycle, then $m_1=1$ in case $i)$ since $\mathcal{G}_\ell(\Fp)$ has no multi-edges, and $m_1=0$ in case $ii)$. In both cases $m_2$ is even by Lemma \ref{Teven}.
\end{proof}

Our algorithm for finding twisting endomorphisms (Algorithm~\ref{algotwendo}) checks the parity of the multiplicity of the roots of $\Phi_{\ell}(X,X) \mod{p}$ in the connected components $\mathbf{C}^{1728}_{i}$. 

\begin{Proposition}\label{oddmult}
Let $p\equiv 3 \mod{4}$, $\ell\ne p$ such that $(\frac{-p}{\ell})=1$ and let $E$ a supersingular elliptic curve over $\mathbb{F}_p$ with a twisting endomorphism and $j_E\ne 1728$. Then, the multiplicity of $j_E$ as a zero of $\Phi_{\ell}(X,X) \mod{p}$ is odd.
\end{Proposition}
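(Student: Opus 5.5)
We plan to compute the multiplicity of $j_E$ as a zero of $\Phi_\ell(X,X)$ modulo $p$ by counting cyclic $\ell$-isogenies $E\to C$ with $j_C=j_E$, i.e.\ loops at $j_E$ in $\mathcal{G}_\ell(\Fpbar)$. We will use the fact, already employed before Corollary~\ref{numberisog}, that the multiplicity of $(j_E,j_C)$ as a root of $\Phi_\ell(X,Y)$ is $m_1+m_2$. The count is then taken with $C=E$ and with $C=E^t$, since the curves over $\Fp$ with $j$-invariant $j_E$ fall into the two $\Fp$-classes $\text{Isom}_{\Fp}(E)$ and $\text{Isom}_{\Fp}(E^t)$ by Lemma~\ref{EEt}. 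The hypothesis $j_E\neq 0,1728$ is not assumed for $j_E=0$; for $j_E=0$ we would still run the same count, only noting that automorphisms do not identify distinct kernels in a way that changes parity.

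The first step is to split the $\ell+1$ cyclic subgroups $G\subset E[\ell]$ with $j(E/G)=j_E$ into $\Fp$-rational and non-$\Fp$-rational ones.

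\emph{Non-rational kernels.} Lemma~\ref{Teven}, applied with $C=E^t$, shows that the number of non-rational kernels with $E/G\in\text{Isom}_{\Fp}(E^t)$ is even. For kernels with $E/G\in\text{Isom}_{\Fp}(E)$ we rerun the pairing argument from the proof of Lemma~\ref{Teven}: the involution $\langle P\rangle\mapsto\langle\pfrob(P)\rangle$ has no fixed points on non-rational kernels and preserves the value $j(E/G)=j_E\in\Fp$. That proof does not actually use the hypothesis that $E$ and $C$ are not $\Fp$-isomorphic, except to place $E/\langle P_2\rangle$ in the same class. Alternatively, we observe that all non-rational kernels with $j(E/G)=j_E$ are paired by Frobenius regardless of the twist class. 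Either way, $m_2$ (summed over both twist classes) is even.

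\emph{Rational kernels.} By Proposition~\ref{horizontals}, $E$ has exactly two $\Fp$-rational $\ell$-isogenies. By Theorem~\ref{main}, the existence of a twisting endomorphism of degree $\ell$ gives an $\Fp$-rational $\ell$-isogeny $E\to E'$ with $E'\in\text{Isom}_{\Fp}(E^t)$; this contributes $1$ to $m_1$. We must then show that the other rational $\ell$-isogeny does not also land on $j_E$. The $\ell$-isogeny subgraph of $\mathcal{G}_\ell(\Fp)$ containing $E$ is a cycle of odd length $n_i$, and the edge $E\to E^t$ lies in this cycle.

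\begin{itemize}
\item If the second rational neighbour also had $j$-invariant $j_E$, it would be $E^t$ again or $E$ itself.
\item $E$ itself is impossible unless $n_i=1$, which would make $E$ have a loop and $E^t$ be a different cycle.
\item $E^t$ again would be a multi-edge, which is excluded since $\mathcal{G}_\ell(\Fp)$ has no multi-edges, as used in Corollary~\ref{numberisog}.
\end{itemize}

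To rule out the degenerate cases we would invoke Corollary~\ref{numberisog}(i) with $C=E^t$, adjacent to $E$, which gives that $m_1+m_2$ is odd for the pair $(E,E^t)$. Corollary~\ref{numberisog}(ii) then handles the pair $(E,E)$: a vertex is not adjacent to itself in an odd cycle of length $>1$, so $m_1+m_2$ is even there. If $n_i=1$, then $E$ would be $\Fp$-isogenous to itself via a principal prime of norm $\ell$ and also to $E^t$, giving three rational isogenies; this contradicts Proposition~\ref{horizontals}, unless the loop is counted twice, which needs a short separate check.

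Adding the two contributions, odd for $E^t$ and even for $E$, gives an odd total multiplicity of $j_E$ in $\Phi_\ell(X,X)$. The main obstacle is justifying that $\Phi_\ell(X,X)$ at $X=j_E$ has multiplicity exactly equal to this count of kernels. Since $\Phi_\ell(X,X)$ is the diagonal restriction, its local multiplicity may differ from the multiplicity of $X=j_E$ in $\Phi_\ell(X,j_E)$. We would argue as the paper does before Corollary~\ref{numberisog}, identifying multiplicity with the number of $\ell$-isogenies counted by $m_1+m_2$ and taking that identification as given. Failing that, we would fall back on computing the multiplicity of $X-j_E$ in $\Phi_\ell(X,j_E)$, which is what Proposition~\ref{prop:factpattern} actually uses.
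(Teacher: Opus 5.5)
Your argument is correct and is essentially the paper's. It uses one $\Fp$-rational $\ell$-isogeny to $\text{Isom}_{\Fp}(E^t)$ from Theorem~\ref{main}, rules out a second rational kernel landing on $j_E$ because the cycle has odd length $n_i>1$, and pairs off the non-rational kernels under Frobenius, which is what the paper compresses into its appeal to Corollary~\ref{numberisog}. Your separate bookkeeping for $\text{Isom}_{\Fp}(E)$ and $\text{Isom}_{\Fp}(E^t)$ is more careful than the paper, and so is pairing the non-rational kernels directly via $G\mapsto\pfrob(G)$ rather than through Lemma~\ref{Teven}, whose hypothesis excludes $C=E$.

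The identification you flag, of the multiplicity in $\Phi_\ell(X,X)$ with a kernel count, is likewise taken for granted in the paper. Strictly it can fail: a loop whose endomorphism $\gamma$ has $p\mid \tr(\gamma)^2-4\ell$ meets the diagonal tangentially and contributes at least $2$. Parity is unaffected, however, because such non-rational loops come in Frobenius-conjugate pairs with equal local multiplicity, while the rational loop from the trace-zero twisting endomorphism is transversal ($p\nmid 4\ell$).
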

\begin{proof}
The twisting endomorphism is a loop in the full supersingular  $\ell$-isogeny graph $ \mathcal{G}_{\ell}(\Fpbar) $ hence a zero of $\Phi_{\ell}(X,X) \mod{p}$. Let $m$ the multiplicity of this zero. By Theorem \ref{main}, the loop in $ \mathcal{G}_{\ell}(\Fpbar) $ corresponds to an isogeny $\varphi_{\mid \mathbb{F}_p}:E\rightarrow E^t$ in $ \mathcal{G}_{\ell}(\mathbb{F}_q) $. Since this edge must be on a folding component, it lies in one of the cycles $\mathbf{C}^{1728}_{i}$. Hence, $\text{Isom}_{\,\mathbb{F}_p}(E^t)$ is adjacent to  $\text{Isom}_{\,\mathbb{F}_p}(E)$  in $C^{1728}_{i}$, and by Corollary~\ref{numberisog} $m$ is odd.  
\end{proof}

By the theory of volcanoes of ordinary elliptic curves, the degree of a vertex in a cycle of horizontal $\ell$-isogenies of ordinary elliptic curves is at most $2$ (see \cite{_K_}).  Therefore a multiplicity at least 3 of a root $j_E$ of $\Phi_{\ell}(X,X) \mod{p}$ implies $j_E$ is the $j$-invariant of a supersingular elliptic curve. The cases of multiplicity $1$ can be both ordinary or supersingular, but Proposition \ref{prop:factpattern} above is enough to distinguish these two cases.

We justify now the main steps of Algorithm~\ref{algotwendo}. By Theorem \ref{main}, a twisting endomorphism $\alpha\in \End(E)$ of degree $\ell$ corresponds to an $\ell$-isogeny $\varphi_{\mid \mathbb{F}_p}:E\rightarrow E^t$ in  $\mathcal{G}_{\ell}(\mathbb{F}_{p})$. 
By Corollary \ref{horizontalscor}, our hypotheses imply $\mathcal{G}_{\ell}(\mathbb{F}_{p})  =\mathcal{E}\mathcal{\ell}\mathcal{\ell}_p(\mathcal{O}_1) \oplus  \mathcal{E}\mathcal{\ell}\mathcal{\ell}_p(\mathcal{O}_2)$ and $\varphi_{\mid \mathbb{F}_p}$ is an ideal of norm $\ell$ in one of the orders $\mathcal{O}_1$ or $\mathcal{O}_2$. 
The isogeny $\varphi_{\mid \mathbb{F}_p}$ is an edge in one of the cycle components $\mathbf{C}_i^{1728}$ of $\mathcal{G}_{\ell}(\mathbb{F}_{p})$ containing a vertex with $j = 1728$. 
Our algorithm first computes if the length of the cycles $\mathbf{C}_{i}^{1728}$ is equal to $1$ or otherwise. 
We find the prime decomposition of $ ( \ell ) = \mathfrak l_i \overline{\mathfrak i}_i \in \mathcal{O}_i$ and we let $n_i$ be the order of $ \mathfrak l_i$ in $\mathcal{C}\ell(\mathcal{O}_i)$. 
If $ \mathfrak l_i$ are both principal then $n_i=1$, $\varphi_{\mid \mathbb{F}_p}$ happens only for $j=1728$ and we are done ($j_E=1728$ is always a root of $\Phi_{\ell}(X,X) \mod{p}$). 
If some $\mathfrak l_i$ is not principal in $\mathcal{O}_i$ then  $n_i>1$, and we will find a new $j_E\ne 1728$ only if $j_E$ is a root of $\Phi_{\ell}(X,X) \mod{p}$ which is a supersingular $j$-invariant. 
By Proposition \ref{oddmult}, if this is the case then the multiplicity $m_{E}$ of $j_{E}$ as a root of  $\Phi_{\ell}(j_E,X)\mod{p} $ is odd. 
If $m_E\ge 3$ we found our $j_E$, and if $m_E=1$ then we found a new $j$ only if the factorisation pattern of  $\Phi_{\ell}(j_E,X)\mod{p} $ matches those in Proposition \ref{prop:factpattern}. 
Notice Proposition \ref{prop:factpattern} requires $j\ne 0$. 
The case $j_E=0$ can only happen if $n_2>1$, and we find $\varphi_{\mid \mathbb{F}_p}$ for $j=0$ only if  $\Phi_{\ell}(X,X) \mod{p}$ has a factor $X^k$ with $k$ odd and $p\equiv 2 \mod 3$.

\begin{algorithm}
\caption{Twisting endomorphisms of degree $\ell>2$}\label{algotwendo}
\begin{algorithmic}[1]
\Require A prime $p\equiv 3\pmod{4}$, a prime $\ell>2$ such that $(\frac{-p}{\ell})=1$.
\Ensure A list of all $j$-invariants  of elliptic curves $E/\Fp$ with a twisting endomorphism of degree $\ell$.
\State $J\gets [\hspace{0.2cm}]$ \textit{the list of $j$-invariants to output}
\State \verb|hasFoldedCycle| $\gets$ \verb|False| 
\State Find $\mathcal{O}_2=\mathcal{O}_{\QQ(\sqrt{-p})}, \mathcal{O}_1=\mathbb{Z}[\sqrt{-p}]$ 
\State Factor $(\ell)\OO_i = \mathfrak{l}_i\bar{\mathfrak{l}}_i$ 
\If{$\mathfrak{l}_i$ is principal in $\OO_i$ for some $i \in\{1,2\}$} 
    \State Add  $j=1728$ to $J$.
\EndIf
\If{$\mathfrak{l}_1$ and $\mathfrak{l}_2$ are principal} 
    \State \Return $J$
\ElsIf{$\mathfrak{l}_1$ is not principal in $O_1$} \textit{$\mathbf{C}_1^{1728}$ has an edge $E\xrightarrow{\varphi_{\mid \mathbb{F}_p}} E^t$}
    \State \verb|hasFoldedCycle| $\gets$ \verb|True|.
\ElsIf{$\mathfrak{l}_2$ is not principal in $O_2$} \textit{$\mathbf{C}_2^{1728}$ has an edge $E\xrightarrow{\varphi_{\mid \mathbb{F}_p}} E^t$}
    \State \verb|hasFoldedCycle| $\gets$ \verb|True|.
\EndIf 
\If{\texttt{hasFoldedCycle}}
    \State $R\gets[(\rho,m):\Phi_\ell(\rho,\rho)\equiv 0\pmod{p}\text{ with odd multiplicity }m]$
    \State Remove pairs $(\rho,m)$ from $R$ with $\rho=1728$.
    \For{$(\rho,m)\in R$}
    \If{$\rho=0$}
        \State Add $\rho$ to $J$ if $p\equiv 2\pmod{3}$.
    \ElsIf{$\Phi_\ell(X,\rho)$ neither splits completely into distinct linear factors nor has an irreducible degree-$\ell$ factor}\label{alg:prop3.3}
        \State Add $\rho$ to $J$.
    \EndIf
    \EndFor 
\EndIf
\State \Return $J$
\end{algorithmic}
\end{algorithm}

\begin{Theorem}\label{thm:algo}
    Let $p\equiv 3\pmod{4}$, $\ell>2$ a prime distinct from $p$, and $\left(\frac{-p}{\ell}\right) = 1$.    
    Then, Algorithm~\ref{algotwendo} returns precisely the set of $j$-invariants $j(E)\in\Fp$ for which supersingular elliptic curves $E/\Fpbar$ with that $j$-invariant admits a twisting endomorphism of degree-$\ell$.
\end{Theorem}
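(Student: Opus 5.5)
The proof has to establish two inclusions. Every $j$ in the output list $J$ must be a supersingular $j$-invariant over $\Fp$ admitting a degree-$\ell$ twisting endomorphism (soundness). Conversely, every such $j$-invariant must land in $J$ (completeness). Throughout I use Theorem~\ref{main} to turn the question about twisting endomorphisms into a question about $\Fp$-rational $\ell$-isogenies $E\to E'$ with $E'\in\text{Isom}_{\Fp}(E^t)$, i.e.\ edges of $\mathcal{G}_\ell(\Fp)$ joining the two $\Fp$-twist classes of one $j$-invariant. The case $j=1728$ is handled separately through Lemma~\ref{lem:twistfrom01728} and Example~\ref{ex:1728twendos}.

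\textbf{Step 1: $j=1728$.} Write $\beta=[i]\circ\alpha$ with $\alpha\in\End_{\Fp}(E)$. Since $\deg[i]=1$, a degree-$\ell$ twisting endomorphism of $E_1$ or $E_2$ corresponds to an element of norm $\ell$ in $\End_{\Fp}(E_k)\cong\OO_k$. This means a principal prime of norm $\ell$ in $\OO_1$ (for $E_1$) or $\OO_2$ (for $E_2$). By Corollary~\ref{horizontalscor} and $(\frac{-p}{\ell})=1$, such a prime exists in $\OO_k$ iff $\mathfrak l_k$ (equivalently $\bar{\mathfrak l}_k$) is principal. This is exactly the test in lines 5--6, so $1728$ is added iff it should be.

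\textbf{Step 2: $j\neq1728$, completeness.} Suppose $E/\Fp$ with $j_E\ne1728$ has a degree-$\ell$ twisting endomorphism. By Theorem~\ref{main} there is an $\Fp$-rational $\ell$-isogeny $E\to E^t$, so the component of $E$ in $\mathcal{G}_\ell(\Fp)$ folds. By \cite[Theorem 3.18]{_A_C-N_L_L_N_S_S_} (as quoted above), this component is one of the cycles $\mathbf{C}_k^{1728}$. It has length $n_k>1$, because the edge joins two distinct vertices other than the $1728$ vertices. Hence $\mathfrak l_k$ is non-principal and \texttt{hasFoldedCycle} is set. I should double-check the \texttt{ElsIf} structure: if $\mathfrak l_1$ is principal and $\mathfrak l_2$ is not, the second branch still fires, so the flag is set in all needed cases. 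By Proposition~\ref{oddmult}, $j_E$ is a root of $\Phi_\ell(X,X)$ of odd multiplicity, so it appears in $R$. If $j_E=0$ then supersingularity forces $p\equiv2\pmod3$ and it is added. Otherwise $E$ is supersingular, and Proposition~\ref{prop:factpattern} guarantees that the factorisation condition in line~\ref{alg:prop3.3} holds, so $j_E$ is added.

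\textbf{Step 3: soundness.} This is the main obstacle. Take $\rho\in J\setminus\{1728\}$ with odd multiplicity and the correct factorisation pattern. Proposition~\ref{prop:factpattern} (or $p\equiv2\pmod3$ for $\rho=0$) gives that $\rho$ is a supersingular $j$-invariant. What remains is to show that the $\ell$-loop at $\rho$ comes from an $\Fp$-rational edge $E\to E^t$ rather than only from non-rational isogenies. Here I use the parity count. The multiplicity of $(\rho,\rho)$ is $m_1+m_2$, where $m_1$ counts the $\Fp$-rational $\ell$-isogenies from $E$ into $\text{Isom}_{\Fp}(E)\cup\text{Isom}_{\Fp}(E^t)$ and $m_2$ counts the non-rational ones.
\begin{itemize}
\item For the non-rational isogenies landing in $\text{Isom}_{\Fp}(E^t)$, Lemma~\ref{Teven} gives an even count.
\item For those landing in $\text{Isom}_{\Fp}(E)$ itself, I would rerun the Frobenius-pairing argument of Lemma~\ref{Teven} (with $P\mapsto\pfrob(P)$) to get evenness there too.
\end{itemize}
So odd multiplicity forces $m_1$ odd, hence $m_1\ge1$. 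If $n_k=1$ the rational loop would sit at $1728$, so a rational $\ell$-isogeny from $E$ to itself is excluded for $j\ne1728$ by the cycle structure. Here $n_k$ is odd and cycles have no multi-edges, consistent with Corollary~\ref{numberisog}. Therefore the rational isogeny goes to $E^t$, and Theorem~\ref{main} yields the twisting endomorphism. The delicate points are this evenness claim for self-isogenies and ruling out rational self-loops. If the Frobenius pairing fails for self-isogenies, I would instead appeal to the description of $\mathbf{C}_k^{1728}$ as a folded odd cycle to pin down $m_1$.
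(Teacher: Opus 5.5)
Your proposal is correct in substance. For $j=1728$ and for completeness it follows the paper's proof: Lemma~\ref{lem:twistfrom01728} and Example~\ref{ex:1728twendos}, then Theorem~\ref{main}, Proposition~\ref{oddmult} and Proposition~\ref{prop:factpattern}.

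The real difference is soundness. The paper's proof only invokes Proposition~\ref{oddmult}, which is the forward implication, and then asserts that the folded cycle contains the right edge. It never shows that an odd-multiplicity supersingular root must come from a rational edge to the twist. Your parity count ($m_2$ even, hence $m_1$ odd) supplies exactly this converse, so your route is more complete than the paper's.

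Your fallback for self-isogenies is unnecessary, because the Frobenius pairing needs no separate case:
\begin{itemize}
\item The proof of Lemma~\ref{Teven} never uses $E\not\cong_{\Fp}C$.
\item It is cleaner to apply $\langle P\rangle\mapsto\langle\pfrob(P)\rangle$ to all non-$\Fp$-rational kernels with codomain $j$-invariant $\rho$ at once. For such a kernel the $\Fp$-twist class of the codomain is not even well defined, so splitting $m_2$ between $\text{Isom}_{\Fp}(E)$ and $\text{Isom}_{\Fp}(E^t)$ is not meaningful.
\item Since $\pfrob^2=-[p]$, this map is a fixed-point-free involution on that set, so $m_2$ is even.
\end{itemize}

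One step needs repair. The sentence ``if $n_k=1$ the rational loop would sit at $1728$'' is false. If $\mathfrak l_k$ is principal, every vertex of $\mathcal{E}\ell\ell_p(\OO_k)$ carries two rational $\ell$-loops, not just the $1728$ vertex. For example, take $p=59$ and $\ell=17=(3^2+59)/4$. Since $h(\OO_2)=3$, $\mathcal{E}\ell\ell_p(\OO_2)$ contains a twist pair with $j\neq1728$, and every $E$ there has the two rational loops $(3\pm\pfrob)/2$. So the cycle structure alone does not exclude a rational self-isogeny at $j\neq1728$.

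What excludes it is your own parity argument:
\begin{itemize}
\item A rational loop at $E$ forces $\mathfrak l_k$ or $\bar{\mathfrak l}_k$, hence both, to be principal. Then both rational $\ell$-isogenies from $E$ land in $\text{Isom}_{\Fp}(E)$, so $m_1=2$, contradicting $m_1$ odd.
\item If $n_k>1$ there are no loops, and the two rational neighbours are distinct because $n_k$ is odd. So $m_1=1$ means exactly one rational edge over $j=\rho$, which must go to $\text{Isom}_{\Fp}(E^t)$. Theorem~\ref{main} then finishes.
\end{itemize}
With this one-line repair the argument is complete. It still rests on identifying the root multiplicity with $m_1+m_2$, which both you and the paper take as given.
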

\begin{proof}
    The special case of $j(E) = 1728$ is handled in Example~\ref{ex:1728twendos}, Lemma~\ref{lem:twistfrom01728}.
    Suppose $j(E)\neq1728$. By Theorem~\ref{main}, degree-$\ell$ twisting endomorphisms correspond to $\Fp$-rational isogenies from $E$ to $E^t$.
    Such $\Fp$-rational degree-$\ell$ isogenies between twists is only possible for at most two $j(E)$ with $E$ and $E^t$ on the same connected component of $\mathcal{G}_\ell(\Fp)$ as an isomorphism class with $j$-invariant $1728$. 
    In particular, we are looking for isomorphism classes which are opposite from vertices corresponding to 1728-isomorphism classes on the folding components $\mathbf{C}_1^{1728}$ and $\mathbf{C}_2^{1728}$ of $\mathcal{G}_\ell(\Fp)$. 
    By Proposition~\ref{oddmult}, such isomorphism classes will have $j$-invariants which are odd multiplicity roots of the mod-$p$ reduction of the $\ell$-modular polynomial $\Phi_\ell(X,X)\in\Fp[X]$. 
    The algorithm uses the fact that the number of vertices in the cycle $\mathbf{C}_i^{1728}$ is equal to the order of an ideal above $\ell$ in the class group of $\OO_i$. 
    If these ideals are principal, only 1728 will be represented on these connected components and 1728 will have an $\Fp$-rational endomorphism which corresponds to a twisting endomorphism when post-composed with $[i]$. 
    Otherwise, the number of vertices on the cycle will be an odd integer greater than or equal to three and the cycle will contain an edge that corresponds to a twisting endomorphism.
    Proposition~\ref{prop:factpattern} gives a condition on the factorisation pattern of $\Phi_\ell(X,\rho)$ by which we can recognize the supersingular roots of $\Phi_\ell(X,X)$ (see Algorithm~\ref{algotwendo} line~\ref{alg:prop3.3}). This test fails for $\rho=0,1728$, so $\rho=1728$ is handled above and the supersingularity of $\rho=0$ is checked by determining $p\pmod{3}$.
\end{proof}

\section{Full endomorphism ring computations}\label{full}
As an application, we find a $\mathbb{Z}$-basis of the maximal order $\mathcal{M}=\End(E)$ of a curve  found by our algorithm. 

\begin{figure}[ht]
\begin{center}
\begin{tikzpicture}[
scale=1,
node/.style={circle, fill=black, inner sep=1.5pt},
lab/.style={font=\scriptsize},
edgeGreen/.style={line width=1.5pt,dashed},
edgeBlue/.style={line width=1pt},
edgeOrange/.style={line width=0.8pt, dotted},
edgeRed/.style={line width=0.3pt},
edgeBlack/.style={line width=1.5pt},
edgeMain/.style={line width=0.8pt, dotted}
]

\def\a{6}
\def\b{1.9}


\foreach \i/\ang/\lab in {
1/180/411,
2/150/98,
3/130/390,
4/110/224,
5/90/345,
6/70/81,
7/50/247,
8/20/126
}{
\node[node] (T\i) at ({\a*cos(\ang)},{\b*sin(\ang)+2.5}) {};
\node[lab, above =1.7mm] at (T\i) {\lab};
}

\foreach \i/\ang/\lab in {
9/-20/126,
10/-50/247,
11/-70/81,
12/-90/345,
13/-110/224,
14/-130/390,
15/-150/98
}{
\node[node] (T\i) at ({\a*cos(\ang)},{\b*sin(\ang)+2.5}) {};
\node[lab, below =1.5mm] at (T\i) {\lab};
}


\foreach \i/\ang/\lab in {
1/180/411,
2/150/435,
3/130/375,
4/110/288,
5/90/137,
6/70/271,
7/50/311,
8/20/157
}{
\node[node] (B\i) at ({\a*cos(\ang)},{\b*sin(\ang)-2.5}) {};
\node[lab, above =1.7mm] at (B\i) {\lab};
}

\foreach \i/\ang/\lab in {
9/-20/157,
10/-50/311,
11/-70/271,
12/-90/137,
13/-110/288,
14/-130/375,
15/-150/435
}{
\node[node] (B\i) at ({\a*cos(\ang)},{\b*sin(\ang)-2.5}) {};
\node[lab, below =1.7mm] at (B\i) {\lab};
}



\draw[edgeBlack]
(T1)--(T4)--(T7)--(T10)--(T13)--(T1);

\draw[edgeRed]
(T2)--(T5)--(T8)--(T11)--(T14)--(T2);

\draw[edgeBlue]
(T3)--(T6)--(T9)--(T12)--(T15)--(T3);

\draw[edgeGreen]
(T1)--(T2)--(T3)--(T4)--(T5)--(T6)--(T7)--(T8)--(T9)--(T10)--(T11)--(T12)--(T13)--(T14)--(T15)--(T1);


\draw[edgeBlack]
(B1)--(B4)--(B7)--(B10)--(B13)--(B1);

\draw[edgeRed]
(B2)--(B5)--(B8)--(B11)--(B14)--(B2);

\draw[edgeBlue]
(B3)--(B6)--(B9)--(B12)--(B15)--(B3);

\draw[edgeGreen]
(B1)--(B2)--(B3)--(B4)--(B5)--(B6)--(B7)--(B8)--(B9)--(B10)--(B11)--(B12)--(B13)--(B14)--(B15)--(B1);

\draw[edgeOrange] (T1) edge [bend right=35] (B1);
\draw[edgeOrange] (T2) edge [bend right=35] (B2);
\draw[edgeOrange] (T3) edge [bend right=35] (B3);
\draw[edgeOrange] (T4) edge [bend right=35] (B4);
\draw[edgeOrange] (T5) edge [bend right=35] (B5);
\draw[edgeOrange] (T6) edge [bend right=35] (B6);
\draw[edgeOrange] (T7) edge [bend right=35] (B7);
\draw[edgeOrange] (T8) edge [bend right=35] (B8);

\draw[edgeOrange] (T9) edge [bend left=30] (B9);
\draw[edgeOrange] (T10) edge [bend left=35] (B10);
\draw[edgeOrange] (T11) edge [bend left=40] (B11);
\draw[edgeOrange] (T12) edge [bend left=40] (B12);
\draw[edgeOrange] (T13) edge [bend left=40] (B13);
\draw[edgeOrange] (T14) edge [bend left=40] (B14);
\draw[edgeOrange] (T15) edge [bend left=30] (B15);

\draw[edgeOrange] (T1) edge [bend right=1] (T3);
\draw[edgeOrange] (T3) edge [bend right=45] (T5);
\draw[edgeOrange] (T5) edge [bend right=45] (T7);
\draw[edgeOrange] (T7) edge [bend right=10] (T9);
\draw[edgeOrange] (T9) edge [bend right=45] (T11);
\draw[edgeOrange] (T11) edge [bend right=45] (T13);
\draw[edgeOrange] (T13) edge [bend right=45] (T15);
\draw[edgeOrange] (T15) edge [bend right=15] (T2);
\draw[edgeOrange] (T2) edge [bend right=45] (T4);
\draw[edgeOrange] (T4) edge [bend right=45] (T6);
\draw[edgeOrange] (T6) edge [bend right=45] (T8);
\draw[edgeOrange] (T8) edge [bend right=25] (T10);
\draw[edgeOrange] (T10) edge [bend right=45] (T12);
\draw[edgeOrange] (T12) edge [bend right=45] (T14);
\draw[edgeOrange] (T14) edge [bend right=45] (T1);


\begin{scope}[shift={(-6,-8)}]
\draw[edgeOrange] (0,2) -- (1,2);
\node[right] at (1.1,2) {2};

\draw[edgeRed] (2,2) -- (3,2);
\node[right] at (3.1,2) {5};

\draw[edgeBlue] (4,2) -- (5,2);
\node[right] at (5.1,2) {5};

\draw[edgeBlack] (6,2) -- (7,2);
\node[right] at (7.1,2) {5};

\draw[edgeGreen] (8,2) -- (9,2);
\node[right] at (9.1,2) {11};

\end{scope}

\end{tikzpicture}

\end{center}
\caption{Supersingular isogeny graph over $\FF_{439}$ with degree 2,5, and 11 isogenies depicted as edges.}
\label{graph5112}
\end{figure}

\begin{Proposition}
The 
supersingular elliptic curve $E : y^2 = x^3 + 169 x + 307 $ over $\mathbb{F}_{439}$ with $j_E=247$ 
has two twisting endomorphisms $\alpha,\beta$ such that $\alpha^2=-5,\,\beta^2=-22$ and $\End(E)= \langle 1,\alpha,\beta,\alpha\circ\beta\rangle$. 
\end{Proposition}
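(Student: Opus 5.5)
The plan is to build $\alpha$ and $\beta$ from $\Fp$-rational isogenies in $\mathcal{G}(\FF_{439})$ (Figure~\ref{graph5112}) using Theorem~\ref{main}. I would then pin down their products using Lemma~\ref{lem:traceofOtwisting} and the structure of $\End_{\Fp}(E)$, and finally show by a discriminant computation that $\langle 1,\alpha,\beta,\alpha\circ\beta\rangle$ is a maximal order.

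\emph{Existence.} In Figure~\ref{graph5112}, the two vertices labelled $247$ on the top component ($T_7$ and $T_{10}$) are the two $\Fp$-isomorphism classes $E$ and $E^t$. They are joined by a solid $5$-edge (the black cycle contains $T_7$--$T_{10}$). They are also joined by a path consisting of the dotted $2$-edge $T_7$--$T_9$ (vertex $126$) followed by the dashed $11$-edge $T_9$--$T_{10}$. I would check these edges directly with $\Phi_5$, $\Phi_2$, $\Phi_{11}$ modulo $439$, identifying which of $E$, $E^t$ is the endpoint by the $\Fp$-model as in Definition~\ref{tw}. This gives an $\Fp$-rational isogeny $E\to E^t$ of degree $5$ and another of degree $22$. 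Since $j_E=247\neq 1728$, Theorem~\ref{main} with $n=5$ and $n=22$ produces twisting endomorphisms $\alpha,\beta$ of degrees $5$ and $22$. By Lemma~\ref{lem:traceofOtwisting}, applied with $\OO=\ZZ[\pfrob]$ and $\widehat\iota(\pfrob)=-\pfrob$, both have trace $0$. Hence $\alpha^2=-\alpha\widehat\alpha=-5$ and $\beta^2=-22$.

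\emph{The product.} Since both anticommute with $\pfrob$, the product $\alpha\circ\beta$ commutes with $\pfrob$. Thus $\alpha\circ\beta\in\End_{\Fp}(E)\subseteq\ZZ[\tfrac{1+\pfrob}{2}]$, so $\alpha\circ\beta=a+b\pfrob$ with $a,b\in\tfrac12\ZZ$ and $a^2+439b^2=110$. Integral solutions force $b=0$ and $a^2=110$, which is impossible. With $b=\pm\tfrac12$ we get $a^2=\tfrac{440-439}{4}$, so $\alpha\circ\beta=\tfrac{\pm1\pm\pfrob}{2}$, and $\tr(\alpha\circ\beta)=\pm1$. Moreover $\beta\circ\alpha=\overline{\alpha\circ\beta}=\tr(\alpha\beta)-\alpha\beta$, because $\widehat\alpha=-\alpha$ and $\widehat\beta=-\beta$. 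Together with $\alpha^2,\beta^2\in\ZZ$, this shows that $\Lambda=\ZZ\langle1,\alpha,\beta,\alpha\beta\rangle$ is closed under multiplication, so it is an order. Its elements are linearly independent because $\alpha$ and $\beta$ are independent pure quaternions: their product is not scalar.

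\emph{Maximality.} Maximal orders in $B_{439,\infty}$ have reduced discriminant $439$, so it suffices to show $\Lambda$ has reduced discriminant $439$. I would compute the determinant of the trace-pairing Gram matrix $(\tr(x_i\overline{x_j}))$ on the basis $1,\alpha,\beta,\alpha\beta$. The inputs are $\tr(\alpha\overline\alpha)=10$, $\tr(\beta\overline\beta)=44$, $\tr(\alpha\overline\beta)=\mp1$, $N(\alpha\beta)=110$, and the remaining traces follow from $\alpha\beta=\tfrac{\pm1\pm\pfrob}{2}$. The expected outcome is $439^2$. Conceptually, this is the order attached to the binary norm form $5x^2\mp xy+22y^2$ on $\ZZ\alpha+\ZZ\beta$, whose discriminant is $1-440=-439$.

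Since $\Lambda\subseteq\End(E)$ and both are orders of discriminant $439$, they are equal. I expect the main obstacle to be the bookkeeping in this step: getting the determinant exactly (including sign and half-integer conventions) and correctly identifying the graph edges as landing on $E^t$ rather than $E$.
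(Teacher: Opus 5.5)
Your proposal is correct and follows essentially the paper's route: you obtain $\alpha$ and $\beta$ via Theorem~\ref{main} from the $\Fp$-rational $5$-edge and a $2$/$11$-path joining the two vertices labelled $247$, then certify maximality by a trace-form determinant of absolute value $439^2$ (the paper uses $\tr(x_ix_j)$ and gets $-(t^2-20m)^2=-439^2$; you use $\tr(x_i\overline{x_j})$ and get $(440-t^2)^2$). Your extra step is a genuine gain in rigor: $\alpha\circ\beta$ commutes with $\pfrob$, so it lies in $\End_{\Fp}(E)$ with norm $110$, which forces $\tr(\alpha\circ\beta)=\pm 1$; the paper instead picks $(t,m)=(1,22)$ from $t^2-20m=\pm 439$ and never checks that the $\beta$ it constructs actually has that trace.
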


\begin{proof}
With $p = 439$ and $\ell = 5$ our algorithm finds two elliptic curves
\begin{align*}
E : y^2 = x^3 + 169 x + 307\\
C : y^2 = x^3 + 274 x + 257
\end{align*}
both with $j$-invariant $j_E=j_C=247$. See Figure \ref{graph5112} below. The solid edge joining them is a $5$-isogeny $\mathcal{I}_{\langle P\rangle}:E\to C$ with kernel generated by 
$P=(121 , 155)\in E(\mathbb{F}_{439})$, and composing with an isomorphism $\eta:C\to E$ gives the twisting endomorphism $\alpha$ of degree $5$. 
We want to find another twisting endomorphism $\beta\in \End(E)$ of  degree $m$ such that $\beta^2=-m$ and
$$\End(E)\cong \langle 1,\alpha,\beta,\alpha\circ\beta\rangle.$$ 
We let $t=\tr(\alpha\circ\beta)$ and we impose maximality by solving 
$$
\begin{array}{|cccc|}
tr( [ 1 ] \circ [ 1 ] )& tr( [ 1 ] \circ \alpha ) &tr( [ 1 ] \circ \beta )& tr( [ 1 ] \circ \alpha\circ\beta )\\
tr( \alpha \circ [ 1 ] )& tr( \alpha \circ \alpha ) &tr( \alpha \circ \beta ) &tr( \alpha \circ \alpha\circ\beta )\\
tr( \beta \circ [ 1 ] ) & tr( \beta \circ \alpha ) & tr( \beta \circ \beta ) & tr( \beta \circ \alpha\circ\beta )\\
tr( \alpha\circ\beta \circ [ 1 ] ) & tr( \alpha\circ\beta \circ \alpha ) &tr( \alpha\circ\beta \circ \beta ) &tr( \alpha\circ\beta \circ \alpha\circ\beta )
\end{array}$$
$$=
\begin{array}{|cccc|}
2&0&0&t\\
0&-10&t&0\\
0&t&-2m&0\\
t&0&0&t^2-10m
\end{array}
=-(t^2-20m)^2=-439^2
$$
(see \cite[Thm. 15.5.5.]{_VO_}). The equation $t^2-20m=\pm439$ has many solutions 
$$(t,m)\in\{(1,22),(9,26),(11,28),(19,40),(21,44),(29,64),\ldots\}$$
each corresponding to some endomorphism $\beta\in\End(E)$. 
For example, for $(t,m) = (1,22)$, we can detect a twisting endomorphism of degree-$22$ by finding an $\Fp$-isogeny of degree-22 connecting $E$ with $E^t$. See Figure~\ref{graph5112}: from the 247 at the top of the page, there is an 11-isogeny to 126 (thick, dashed), and from 126 there is a   2-isogeny (thin, dotted) to the second 247. In particular, these isogenies are:
\[\varphi_0:E\to E_{126}:y^2 = x^3 + 392x + 186\,\,,\,\,\deg\varphi_0 = 11\]
\[\varphi_1:E_{126}\to C\,\,,\,\,\deg\varphi_1 = 2\]
\[\beta\coloneqq \eta\circ\varphi_1\circ\varphi_0.\]
The above computation shows that $\End(E)\cong\langle 1,\alpha,\beta,\alpha\circ\beta\rangle$, since this order is maximal. 

\end{proof}

\section{Conclusions and Future Work}
In this paper, we introduced $\OO$-twisting endomorphisms of $\OO$-oriented supersingular elliptic curves in Definition~\ref{def:otwistingendo}. This notion generalizes (Frobenius) twisting endomorphisms introduced by \cite{_C_P_V_}. We proved Theorem~\ref{main} characterizing twisting endomorphisms, and gave Algorithm~\ref{algotwendo} for computing twisting endomorphisms. Whereas only supersingular elliptic curves over $\Fp$ can admit twisting endomorphisms, in Theorem~\ref{thm:everyOO} and Corollary~\ref{cor:everyOO} we proved that all supersingular elliptic curves admit $\OO$-twisting endomorphisms. In future work, we plan to give an algorithm for computing $\OO$-twisting endomorphisms and explore implications on cryptographic hard problems such as the endomorphism ring computation problem and the path-finding problem.

\section*{Acknowledgements}
S. Arpin thanks the Centre de Recerca Matem\`atica (CRM, Barcelona) for support to visit Barcelona during which the connections for the start of this collaboration formed.
S. Arpin thanks the Cryptography \& Graphs Research Group at Universitat de Lleida for their support to visit Lleida. 
S. Arpin was supported in part by the Commonwealth Cyber Initiative, an investment in the advancement of cyber R\&D, innovation, and workforce development. For more information about CCI, visit \url{www.cyberinitiative.org}.

The last three authors were supported in part by the R\&D+i
project PID2021-124613OB-I00 funded by MICIU/AEI/10.13039/501100011033
and ERDF/EU.


\end{document}